\documentclass[11pt]{article}
\usepackage[english]{babel}
\usepackage[T1]{fontenc}
\usepackage[latin1]{inputenc}

\usepackage{amsfonts,amssymb,amsmath,amsthm, graphics, setspace}
\usepackage{bbm}
\usepackage{xcolor}
\usepackage{graphics}
\usepackage{graphicx}
\usepackage{mathtools}
\usepackage{hyperref} 
\usepackage{soul}

\setlength{\textwidth}{15cm} \setlength{\textheight}{22cm}
\setlength{\oddsidemargin}{.5cm}\setlength{\evensidemargin}{-.5cm}
\setlength{\topmargin}{-.5cm} \setlength{\abovedisplayskip}{3mm}
\setlength{\belowdisplayskip}{3mm}
\setlength{\abovedisplayshortskip}{3mm}
\setlength{\belowdisplayshortskip}{3mm}

\newtheorem{theorem}{Theorem}[section]

\newtheorem{proposition}[theorem]{Proposition}

\newtheorem*{theoremnonum}{Theorem}

\theoremstyle{remark}

\parindent=0pt

\numberwithin{equation}{section}

\title{Genetic contribution of an advantaged mutant in the biparental Moran model - finite selection}
\author{Camille Coron, Yves Le Jan}
\date{}

\usepackage{color}
\usepackage{amsmath}
\usepackage{amsthm}

\begin{document}
\maketitle

\begin{abstract}
We consider a population of $N$ individuals, whose dynamics through time is represented by a biparental Moran model with two types: an advantaged type and a disadvantaged type. The advantage is due to a mutation, transmitted in a Mendelian way from parent to child that reduces the death probability of individuals carrying it. We assume that initially this mutation is carried by a proportion $a$ of individuals in the population. Once the mutation is fixed, a gene is sampled uniformly in the population, at a locus independent of the locus under selection. We then give the probability that this gene initially comes from an advantaged individual, i.e. the genetic contribution of these individuals, as a function of $a$ and when the population size is large.
\end{abstract}

\textbf{Keywords:} Biparental Moran model with selection ; Ancestor's genetic contribution.

\section{Motivation and model}

\subsection{Motivation}
In this article we aim at understanding how the advantage conferred to an individual by a genetic mutation influences its contribution to the genome of a population after a long time. This article follows two other works (\cite{geneal} and \cite{CoronLeJan22}) that focus on the contribution of ancestors to the genome of a sexually reproducing population, in which each individual has two parents who contribute equally to the genome of their offspring.

More precisely, we consider a population of haploid individuals reproducing sexually, i.e. for which the genome of each individual is a random mixture of the genome of its two parents. We assume that initially a proportion $a$ of these individuals carry a mutation at one locus, and that individuals carrying this mutation have an increased life expectancy, and therefore are advantaged, regarding genome transmission. Our aim is to study the long time effect of this mutation on the genetic composition of the population, when population size is large, and in particular to quantify the impact of the strength of selection, on the genetic contribution of an ancestor, to the population.

Biparental genealogies have received some interest, notably in \cite{Chang1999,Derrida2000,GravelSteel2015}, in which time to recent common ancestors and ancestors' weights are investigated for the Wright-Fisher biparental model. In \cite{geneal}, we studied the asymptotic law of the  contribution of an ancestor, to the genome of the present time population. The articles \cite{MatsenEvans2008} and \cite{BartonEtheridge2011} study the link between pedigree, individual reproductive success and genetic contribution. The monoparental Moran model with selection at birth has received some interest, notably in \cite{EtheridgeGriffiths} that studies its dual coalescent and \cite{KluthBaake} that notably studies alleles fixation probabilities and ancestral lines. Finally, the limiting case where the strength of selection is infinite was studied in \cite{CoronLeJan22}.

\subsection{Model}

As in the previous papers \cite{geneal} and \cite{CoronLeJan22}, we consider a population of $N$ individuals whose dynamics through time is modeled using a Moran biparental model. However this model is modified in order to take into account the advantage conferred by a mutation of interest. We will in fact assume that selection impacts only the death of individuals. 

More precisely, we assume that the population is composed of a fixed number $N$ of individuals. Next we assume that at a given locus, a mutation confers some advantage to the individuals that carry it. Advantaged individuals (that carry the mutation) are characterized by a death weight $1$ whereas non advantaged individuals are characterized by a death weight $1+s$. At each discrete time step, two individuals are chosen independently and uniformly to be parents, and produce one offspring. This offspring replaces a third individual, chosen with a probability proportional to its death weight. Therefore at each time step, a given non-advantaged individual has a probability to die that is $1+s$ times more important than that of a given advantaged individual. Hence advantaged individuals have an increased life expectancy and consequently a larger mean offspring number. 

Genetic transmission is assumed to follow Mendel rules, which means that for a given locus, one of the two parents is chosen uniformly (among the two) and transmits its allele (i.e. a copy of its genome at this locus) to the offspring. This transmission is not independent for loci that are close on the genome, but can be considered as independent for loci that are on different chromosomes for instance. In particular the transmission of advantage to offspring is assumed to be characterized by Mendelian transmission at one locus. In other words, the offspring inherits the level of advantage of one of its parents, chosen uniformly at random among its two parents. By convention, we call "mother" the parent that transmits its allele at the locus under mutation, and "father" the other parent. Note that the limiting case where $s$ is equal to infinity (therefore the individual that dies is always chosen among non advantaged individuals) has been studied in the previous paper \cite{CoronLeJan22}. Comparisons to this previous work will be provided later on.

Let us denote by $I=\{1,2,...,N\}$ the sites in which individuals live, and denote by $(\mu_n,\pi_n,\kappa_n)\in I^3$ the respective positions of the mother, father, and offspring at time step $n$.
As in \cite{geneal}, this reproduction dynamics defines an oriented random graph on $I\times\mathbb{Z}_+$ (as represented in Figure \ref{FigPedigree}), denoted $\mathcal{G}_N$, representing the pedigree of the population, such that between time $n$ and time $n+1$, two arrows are drawn from $(\kappa_n,n+1)$ to $(\pi_n,n)$ and $(\mu_n,n)$ respectively and $N-1$ arrows are drawn from $(i,n+1)$ to $(i,n)$ for each $i\in I\setminus\{\kappa_n\}$. Note that individuals are now characterized by their advantage : advantaged individuals are represented in red. We finally denote by $\mathcal{Y}_n\subset \{1,...,N\}$ the set of advantaged individuals at time $n$ and denote by $\{\mathcal{F}_n,n\in\mathbb{Z}_+\}$ the natural filtration associated to the stochastic process $(\mu_n,\pi_n,\kappa_n,\mathcal{Y}_n)_{n\in\mathbb{Z}_+}$.

\begin{figure}[ht]
\begin{center}\includegraphics[scale=0.5]{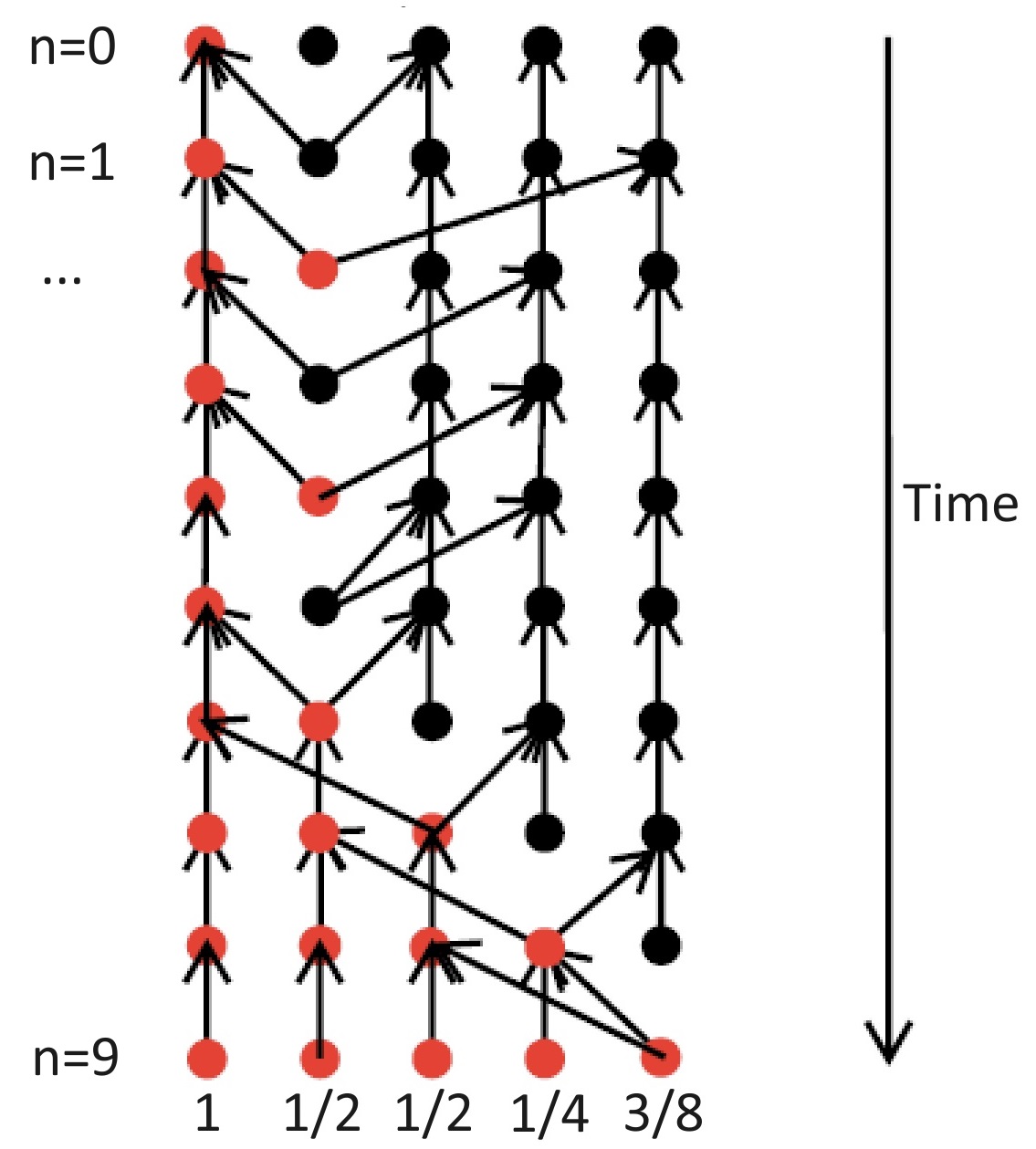}
\end{center} 
\caption{Graph $\mathcal{G}_N$ representing the pedigree of a population with $5$ individuals, during $10$ time steps. The time orientation is from past to future and arrows materialize gene flow between individuals, when going backward in time. Advantaged individuals are represented in red. Numbers at the bottom gives the probability that a gene sampled in each of the individuals come from the initially advantaged individual. In this example the genetic weight of the initially advantaged individual (i.e. the probability that a gene sampled uniformly at time $n=9$ comes from this individual at time $n=0$) is equal to $21/40=1/5(1+1/2+1/2+1/4+3/8).$\label{FigPedigree}}
\end{figure}

\subsection{Ancestors genetic weights}
 To study the impact of selection on the genetic composition of the population we now consider a new locus, distant enough from the locus under mutation, so that the genome is assumed to be transmitted independently at these two loci. Then we consider a gene sampled in one of the individuals present in the population at time $n$ and our interest is to study the probability for this gene, to come from each of the individuals living at time $0$. The genealogy of this gene (i.e. the individual in which a copy of this gene was present, assuming no mutation and no recombination, at each time $t=n-k\leq n$), denoted by $(X^{(n)}_k, n-k)_{0\leq k\leq n}$, is a random walk on the pedigree $\mathcal{G}_N$, starting from the position $(i,n)$. 

The key element in this model, as introduced in \cite{geneal} is therefore the random variable  

\begin{equation} \label{eq:defA} A_n(i,j)=\mathbb{P}(X^{(n)}_n=j|X^{(n)}_0=i,\mathcal{G}_N).\end{equation}

This quantity is a random variable, since it is a deterministic function of the random graph $\mathcal{G}_N$. This quantity, given the genealogy $\mathcal{G}_N$, is the probability that any gene of individual $i$ living in generation $n$ comes from ancestor $j$ living at generation $0$. Illustrations are given in Figure \ref{FigPedigree}. If genome size is very large and the evolutions of distant genes are sufficiently decorrelated, we can expect this quantity to be close to the proportion of genes of individual $i$ that come from individual $j$. This quantity will also be called the weight of the ancestor $j$  in the genome of individual $i$.

\subsection{Main result}\label{sec:IntroResult}
Let us denote by $(\mathcal{Y}_n)_{n\in\mathbb{Z}_+}$ the set of advantaged individuals at time step $n$, and $(Y_n=|\mathcal{Y}_n|)_{n\in\mathbb{Z}_+}$ the number of advantaged individuals at each time $n\in\mathbb{Z}_+$. 

We are interested in the impact of selection on the weight of ancestors, therefore on the probability that a gene sampled in the population comes from an advantaged individual. 
This comes back to studying the two following quantities : 
$$
\Xi^A_n=\frac{\sum_{l\in\mathcal{Y}_n}\sum_{l'\in\mathcal{Y}_0} A_n(l,l')}{Y_n}
$$
which is a random variable (as deterministic function of the pedigree between time $0$ and time $n$) giving the probability that a gene sampled uniformly among advantaged individuals at time $n$ comes from an initially advantaged individual, given the pedigree between time $0$ and time $n$, and
$$
\Xi^B_n=\frac{\sum_{l\notin\mathcal{Y}_n}\sum_{l'\in\mathcal{Y}_0} A_n(l,l')}{N-Y_n}
$$
gives the probability that a gene sampled uniformly among non-advantaged individuals at time $n$ comes from an initially advantaged individual, given the pedigree between time $0$ and time $n$. These two random quantities can be then seen as the weight of advantaged individuals among advantaged and disadvantaged individuals, respectively. An example of weight of the initially advantaged individuals is given in Figure \ref{FigPedigree}.

We will focus on the expectation of these two random variables, and more specifically on the expectation of the first one once the advantageous mutation is fixed (i.e. once $Y_n=N$ which happens at a stopping time we denote by $T^Y_N$), when we start from a fixed proportion $a$ of individual carrying this mutation. 

Our main result is the following : 
\begin{theoremnonum}[Theorem \ref{thm}]
\begin{align*}\mathbb{E}(\Xi^A_{T^Y_N}\mathbf{1}_{T^Y_N<\infty}|Y_0=\lfloor aN\rfloor)&\rightarrow_{N\rightarrow\infty} \frac{a^{\frac{1+s}{2s}}}{(1-a)^{\frac{1}{2s}}}\left(\int_{a}^{1}\frac{(1-u)^{\frac{1}{2s}}}{u^{\frac{1+s}{2s}}}\left[\frac{1}{2}+\frac{1}{2s}\frac{1}{1-u}\right]du\right) \end{align*}
\end{theoremnonum}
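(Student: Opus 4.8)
\section*{Proof plan}

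\textbf{Reduction to a low-dimensional process.} For $i\in I$ write $W_n(i)=\sum_{l'\in\mathcal Y_0}A_n(i,l')$, the weight carried by individual $i$ at time $n$ coming from the initially advantaged set, and set $S^A_n=\sum_{l\in\mathcal Y_n}W_n(l)$, $S^B_n=\sum_{l\notin\mathcal Y_n}W_n(l)$, $T_n=S^A_n+S^B_n=\sum_{i\in I}W_n(i)$, so that $\Xi^A_n=S^A_n/Y_n\in[0,1]$ and $\Xi^B_n=S^B_n/(N-Y_n)$. From the construction of $\mathcal G_N$ one has $W_{n+1}(i)=W_n(i)$ for $i\neq\kappa_n$ and $W_{n+1}(\kappa_n)=\tfrac12\bigl(W_n(\mu_n)+W_n(\pi_n)\bigr)$, whence the one-step identity
$$S^A_{n+1}=S^A_n-\mathbf 1_{\{\kappa_n\in\mathcal Y_n\}}W_n(\kappa_n)+\eta_n\,\tfrac12\bigl(W_n(\mu_n)+W_n(\pi_n)\bigr),$$
with $\eta_n$ the indicator that the newborn is advantaged, and the same identity for $T_n$ with the two advantage-indicators removed. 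Taking $\mathbb E[\,\cdot\mid\mathcal F_n]$ (the parents uniform, $\kappa_n$ drawn proportionally to the death weights and independent of them, and, since $(\mu_n,\pi_n)$ is exchangeable, $\eta_n$ replaceable by $\tfrac12(\mathbf 1_{\mu_n\in\mathcal Y_n}+\mathbf 1_{\pi_n\in\mathcal Y_n})$ inside these expectations), one gets with $x_n=Y_n/N$, $D_n=1+s(1-x_n)$,
$$\mathbb E[S^A_{n+1}-S^A_n\mid\mathcal F_n]=\frac1N\Bigl(-\frac{S^A_n}{D_n}+\frac{S^A_n+x_nT_n}{2}\Bigr),\qquad
\mathbb E[T_{n+1}-T_n\mid\mathcal F_n]=\frac1N\cdot\frac{s(S^A_n-x_nT_n)}{D_n},$$
while $\mathbb E[Y_{n+1}-Y_n\mid\mathcal F_n]=\tfrac1N\,sx_n(1-x_n)/D_n$ and $\mathbb P(\Delta Y=+1)/\mathbb P(\Delta Y=-1)=1+s$. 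In particular $Y_n$ is, up to holding times, a $(1+s)$-biased walk, so $\mathbb P(T^Y_N<\infty\mid Y_0=\lfloor aN\rfloor)\ge 1-(1+s)^{-\lfloor aN\rfloor}\to1$, and the indicator in the statement is asymptotically irrelevant.

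\textbf{Fluid limit and collapse of the ODE.} Since $W_0(i)=\mathbf 1_{\{i\in\mathcal Y_0\}}$ we have $(Y_0,S^A_0,T_0)=(Y_0,Y_0,Y_0)$. The increments and $\mathcal F_n$-drifts of the normalised triple $(x_n,\sigma^A_n,\bar\tau_n):=(Y_n/N,S^A_n/N,T_n/N)$ are $O(1/N)$ with conditional variances $O(1/N^2)$, so on the scale $t=n/N$ a standard stochastic-approximation argument gives, on compact $t$-intervals, convergence to the solution of
$$\dot x=\frac{sx(1-x)}{D},\qquad \dot\sigma^A=-\frac{\sigma^A}{D}+\frac{\sigma^A+x\bar\tau}{2},\qquad \dot{\bar\tau}=\frac{s(\sigma^A-x\bar\tau)}{D},\qquad D=1+s(1-x),$$
started at $(a,a,a)$. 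Passing to $\Xi^A=\sigma^A/x$ and $\Xi^B=(\bar\tau-\sigma^A)/(1-x)$ (the fluid versions of $\Xi^A_n,\Xi^B_n$), a short computation collapses the system to $\dot{\Xi^A}=\tfrac12(1-x)(\Xi^B-\Xi^A)$, $\dot{\Xi^B}=\tfrac12\,x(\Xi^A-\Xi^B)$ with $\Xi^A(0)=1,\ \Xi^B(0)=0$. Hence $\delta:=\Xi^A-\Xi^B$ solves $\dot\delta=-\tfrac12\delta$, i.e.\ $\delta(t)=e^{-t/2}$, and $\dot x$ integrates to $x^{1+s}/(1-x)=\bigl(a^{1+s}/(1-a)\bigr)e^{st}$, so that as a function of $x$,
$$\delta=\Bigl(\tfrac{a^{1+s}(1-x)}{(1-a)x^{1+s}}\Bigr)^{1/(2s)}=\frac{g(a)}{g(x)},\qquad g(x):=\frac{x^{(1+s)/(2s)}}{(1-x)^{1/(2s)}}.$$

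\textbf{Value at fixation and closed form.} Since $x(t)\to1$ and $\delta(t)\to0$ exponentially, $\Xi^A(t)$ converges and, using $\dot{\Xi^A}=-\tfrac12(1-x)\delta$ together with the change of variable $dr=D\,dx/(sx(1-x))$,
$$\Xi^A(\infty)=1-\tfrac12\int_0^\infty(1-x(r))\,\delta(r)\,dr=1-\frac{g(a)}{2s}\int_a^1\frac{1+s(1-u)}{u\,g(u)}\,du .$$
Expanding $1+s(1-u)$ and integrating the power $u^{-(p+1)}$ by parts (with $p=\tfrac{1+s}{2s}$), the non-integral terms recombine (using $a/g(a)=a^{1-p}(1-a)^{1/(2s)}$) and the right-hand side becomes exactly $g(a)\int_a^1 g(u)^{-1}\bigl[\tfrac12+\tfrac{1}{2s(1-u)}\bigr]\,du$, which is the announced expression.

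\textbf{The main obstacle.} It remains to upgrade the compact-interval fluid limit into convergence of $\mathbb E[\Xi^A_{T^Y_N}\mathbf 1_{T^Y_N<\infty}\mid Y_0=\lfloor aN\rfloor]$ to $\Xi^A(\infty)$; this is the delicate point, because fixation occurs on the slightly super-linear scale $T^Y_N\sim\tfrac1s N\log N$, so one cannot simply stop at a fixed $t$. I would argue as follows: for fixed $\varepsilon>0$ let $T_\varepsilon=\inf\{n:Y_n\ge(1-\varepsilon)N\}$; its rescaling converges to the finite fluid hitting time $t_\varepsilon$ of $1-\varepsilon$, so $\mathbb E[\Xi^A_{T_\varepsilon}]\to\Xi^A(t_\varepsilon)\to\Xi^A(\infty)$ as $N\to\infty$ then $\varepsilon\to0$. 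For the remaining stretch one uses that once $Y_n=N$ the process $\Xi^A_n=S^A_n/N$ is a martingale, and more generally $\mathbb E[\Xi^A_{n+1}-\Xi^A_n\mid\mathcal F_n]=O\bigl((1-Y_n/N)/N\bigr)$, so that the expected total variation of $\Xi^A$ between $T_\varepsilon$ and $T^Y_N$ is controlled by $\tfrac1N\sum_{n\ge T_\varepsilon}(1-Y_n/N)=O(\varepsilon/s)+o_N(1)$ — the contribution of the near-boundary regime $N-Y_n=O(\sqrt N)$ being $o_N(1)$ — while the associated martingale part is $O(N^{-1/2})$ in $L^2$. Letting $N\to\infty$ and then $\varepsilon\to0$ finishes the proof. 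I expect making this last step rigorous, in particular controlling $Y_n$ near the boundary $N$, to be the bulk of the technical work; everything before it is bookkeeping and a separable scalar ODE.
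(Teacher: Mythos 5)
Your proposal is correct in substance and reaches the paper's formula by a genuinely different route, so let me compare the two. The paper never works with the raw sums $S^A_n,T_n$: it studies the conditional expectations $U_n=\mathbb{E}(\Xi^A_n|\mathcal{F}^Y_n)$, $V_n=\mathbb{E}(\Xi^B_n|\mathcal{F}^Y_n)$, which satisfy an \emph{exact} $2\times2$ stochastic matrix recursion driven by the jumps of $Y$; passing to the embedded chain $Z$, the difference $\widetilde U_n-\widetilde V_n$ becomes an explicit product of eigenvalues along the $Z$-path, and the limits are extracted by a discrete harmonic-function argument (comparing $\phi=\mathbb{E}(\cdot\,|Z_0=k)$ with an explicit $\psi(k)=f(k/N)$ via the generator $\mathcal{L}$ and a Green's-function bound $\sum_n Q^n$). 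Your route instead tracks the unconditioned triple $(Y_n,S^A_n,T_n)$, whose drifts you compute correctly (I checked them, as well as the collapse to $\dot\delta=-\delta/2$, the first integral $x^{1+s}/(1-x)=\frac{a^{1+s}}{1-a}e^{st}$, and the final integration-by-parts identity turning $1-\frac{g(a)}{2s}\int_a^1\frac{1+s(1-u)}{u\,g(u)}du$ into the paper's expression --- it does hold, since the bracket $\frac12+\frac{1}{2s(1-u)}+\frac{1+s(1-u)}{2su}$ is exactly $-g(u)\frac{d}{du}g(u)^{-1}$). Your fluid-limit approach is more transparent probabilistically and explains \emph{why} the answer is what it is ($\delta(t)=e^{-t/2}$ is the paper's $D_n\to g(a)/g(x)$ in disguise, cf.\ their Proposition 2.6); the paper's approach buys exact finite-$N$ recursions and, crucially, a very clean endgame: $\widetilde U_n$ is pathwise decreasing and $\widetilde V_n$ increasing with $\widetilde V\le\widetilde U$, so $\widetilde U_{T^Z_N}$ is squeezed between $\widetilde V_{T^Z_{\lfloor(1-\epsilon)N\rfloor}}$ and $\widetilde U_{T^Z_{\lfloor(1-\epsilon)N\rfloor}}$, whose limits coincide as $\epsilon\to0$. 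Your endgame (drift of $\Xi^A_n$ is $O((1-Y_n/N)/N)$, expected occupation of level $k$ is $O(N/(N-k))$, hence total drift $O(\varepsilon)$ plus an $o(1)$ martingale part) is quantitatively sound --- the orders of magnitude check out, with the quadratic variation actually $O(\log N/N)$ rather than $O(1/N)$, which changes nothing --- but it is the one place where your argument is still a sketch rather than a proof, and note that the pathwise monotonicity the paper exploits is \emph{not} available for your $\Xi^A_n=S^A_n/Y_n$ itself, only for its conditional expectations, so you cannot simply import their squeeze. Two cosmetic slips: your displayed drift for $Y$ should read $\mathbb{E}[x_{n+1}-x_n|\mathcal{F}_n]=\frac1N\,sx_n(1-x_n)/D_n$ (the drift of $Y$ itself is $O(1)$ per step), and you tacitly take $\kappa_n$ independent of $(\mu_n,\pi_n)$, which is the convention the paper also uses in Proposition 2.1 despite the phrase ``a third individual''.
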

It can be shown that this quantity increases with $s$ and converges to $2\sqrt{a}
-a$ as $s\uparrow \infty$. This result (for $s=\infty$) can also be retrieved from \cite{CoronLeJan22}.
As an example, for large enough $s$, an initial proportion of advantaged individuals equal to $a=1\%$ would yield an average asymptotic genetic weight close to $19\%$ for this set of $1\%$ initially advantaged individuals.

\section{Results and proofs}
\subsection{Number of advantaged individuals dynamics}

%

Recall that $(\mathcal{Y}_n)_{n\in\mathbb{Z}_+}$ is the set of advantaged individuals at time step $n$, and $(Y_n)_{n\in\mathbb{Z}_+}=|\mathcal{Y}_n|$ the number of advantaged individuals at each time step $n\in\mathbb{Z}_+$. This number of advantaged individuals satisfies the following

\begin{proposition}\label{prop:Y}
The stochastic process  $(Y_n)_{n\in\mathbb{Z}_+}$ is a Markov chain such that if $Y_n=k\in \{0,1,...,N\}$ then $Y_{n+1}\in\{k-1,k,k+1\}$, and $\mathbb{P}(Y_{n+1}=k-1|Y_n=k)=p_k\times\frac{1}{2+s}$, $\mathbb{P}(Y_{n+1}=k+1|Y_n=k)=p_k\times\frac{1+s}{2+s}$ and $\mathbb{P}(Y_{n+1}=k|Y_n=k)=1-p_k$, where $$p_k=\frac{k(N-k)}{N\left(\frac{1}{2+s}k+\frac{1+s}{2+s}(N-k)\right)}.$$ This Markov chain is absorbed in $0$ and in $N$.
\end{proposition}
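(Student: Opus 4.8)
The plan is to verify directly from the description of one time step that the transition probabilities of $(Y_n)$ depend only on the current value $k = Y_n$, which establishes the Markov property, and then to compute the three relevant probabilities by conditioning on the three independent (uniform) choices of mother, father, and the site that dies. First I would note that at a given time step, $Y_{n+1}$ differs from $Y_n$ only if the offspring's advantage status differs from that of the individual it replaces; since the offspring's status and the replaced individual's status are each determined by a single draw, $Y_{n+1} \in \{k-1, k, k+1\}$. The event $\{Y_{n+1} = k+1\}$ requires the offspring to be advantaged (equivalently, the ``mother'', chosen uniformly among all $N$ sites, is advantaged — recall the offspring inherits from one of the two parents, and by the paper's convention the relevant parent is the mother, who is uniform on $I$) and the replaced individual to be non-advantaged; the event $\{Y_{n+1} = k-1\}$ is the symmetric one.

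Next I would carry out the conditional probability computation. The site that dies is chosen with probability proportional to its death weight: each advantaged site has weight $1$ and each non-advantaged site has weight $1+s$, so the total death weight in the population is $W_k := k + (1+s)(N-k) = N\big(\tfrac{1}{2+s}k + \tfrac{1+s}{2+s}(N-k)\big)\cdot\tfrac{2+s}{N}$; more usefully $W_k = (2+s)\big(\tfrac{1}{2+s}k + \tfrac{1+s}{2+s}(N-k)\big)\cdot\frac{N}{N}$, i.e. $\tfrac{1}{2+s}k + \tfrac{1+s}{2+s}(N-k) = W_k/(2+s)$. Conditioning on the mother's status: with probability $k/N$ the offspring is advantaged, in which case $Y$ increases iff the dying individual is non-advantaged, which (given the offspring is to be placed) has conditional probability $(1+s)(N-k)/W_k$; with probability $(N-k)/N$ the offspring is non-advantaged, in which case $Y$ decreases iff the dying individual is advantaged, with conditional probability $k/W_k$. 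Hence
\begin{align*}
\mathbb{P}(Y_{n+1}=k+1\mid Y_n=k) &= \frac{k}{N}\cdot\frac{(1+s)(N-k)}{W_k}, &
\mathbb{P}(Y_{n+1}=k-1\mid Y_n=k) &= \frac{N-k}{N}\cdot\frac{k}{W_k}.
\end{align*}
Factoring out $p_k := \dfrac{k(N-k)}{N\,(W_k/(2+s))} = \dfrac{k(N-k)}{N\left(\frac{1}{2+s}k+\frac{1+s}{2+s}(N-k)\right)}$, the up-probability becomes $p_k\cdot\frac{1+s}{2+s}$ and the down-probability $p_k\cdot\frac{1}{2+s}$, and the holding probability is $1-p_k$ by complementation. (One checks $p_k \le 1$: it equals $(2+s)k(N-k)/(N W_k)$, and $W_k \ge k + (N-k) = N \ge$ a suitable bound, indeed $(2+s)k(N-k) \le N W_k$ reduces to an elementary inequality.)

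Finally, the Markov property itself: since the triple $(\mu_n,\pi_n,\kappa_n)$ is drawn independently of $\mathcal{F}_{n-1}$ and the computation above used only the current count $k$ and these fresh draws (advantage statuses of all individuals being determined by $\mathcal{Y}_n$, whose cardinality is $k$), the conditional law of $Y_{n+1}$ given $\mathcal{F}_n$ depends only on $Y_n$. Absorption at $0$ and $N$ is immediate since $p_0 = p_N = 0$. I expect no serious obstacle here; the only mildly delicate points are bookkeeping — correctly identifying which parent's status the offspring inherits (handled by the mother/father convention, making the inherited status uniform on all $N$ sites) and verifying $p_k \le 1$ so that $1-p_k$ is a genuine probability — both of which are routine.
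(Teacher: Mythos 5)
Your proposal is correct and follows essentially the same route as the paper: identify the offspring's advantage status with that of the uniformly chosen ``mother'', compute the status of the dying individual from the death weights $1$ and $1+s$ (total weight $k+(1+s)(N-k)$), multiply the two independent probabilities for the up/down events, and conclude absorption from $p_0=p_N=0$. The only additions beyond the paper's argument are the explicit check that $p_k\le 1$ and the remark on the Markov property, both routine and correct.
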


\begin{proof} At each time step, as only one individual dies and one individual arises, the number of advantaged individual can only be increased or decreased by $1$, or remain the same. Now if the number of advantaged individual is equal to $k$, then the probability for the individual that dies at this time step to be advantaged, is equal to $\frac{k}{k+(1+s)(N-k)}$. Now for the Markov chain $(Y_n)_{n\in\mathbb{Z}_+}$ to decrease, one needs that the mother is a non-advantaged individual, while the replaced individual is advantaged. The first event occurs with probability $(N-k)/N$ while the second event occurs with probability $\frac{k}{k+(1+s)(N-k)} $, therefore $\mathbb{P}(Y_{n+1}=k-1|Y_n=k)=(N-k)/N\times\frac{k}{k+(1+s)(N-k)}=\frac{1}{2+s}\times \frac{k(N-k)}{N(\frac{1}{2+s}k+\frac{1+s}{2+s}(N-k))}$. Similarly, for the Markov chain $(Y_n)_{n\in\mathbb{Z}_+}$ to increase, one needs that the mother is an advantaged individual, while the replaced individual is non-advantaged. The first event occurs with probability $k/N$ while the second event occurs with probability $\frac{(1+s)(N-k)}{k+(1+s)(N-k)} $, therefore $\mathbb{P}(Y_{n+1}=k-1|Y_n=k)=k/N\times\frac{(1+s)(N-k)}{k+(1+s)(N-k)}=\frac{1+s}{2+s}\times \frac{k(N-k)}{N(\frac{1}{2+s}k+\frac{1+s}{2+s}(N-k))}$. Finally for the Markov chain $Y_n$ to stay at position $k$, one needs that both the mother and replaced individual are either both advantaged, or both disadvantaged, which gives that $\mathbb{P}(Y_{n+1}=k|Y_n=k)=\frac{k}{N}\frac{k}{k+(1+s)(N-k)}+\frac{N-k}{N}\frac{(1+s)(N-k)}{k+(1+s)(N-k)}=\frac{k^2+(1+s)(N-k)^2}{N(k+(1+s)(N-k))}=1-\frac{(2+s)k(N-k)}{N(k+(1+s)(N-k))}=1-p_k$. As $p_0=p_N=0$, this gives that the states $0$ and $N$ are absorbing.
\end{proof}

\medskip

\textbf{Notations : }
For any $y\in\{0,1,...,N\}$, we define  $T^Y_{y}=\inf\{n:Y_n=y\}$.
Set also $\tau_0=0$ and for all $n\in\mathbb{Z}_+^*$,
$\tau_n=\inf\{l>\tau_{n-1}:Y_l\neq Y_{\tau_{n-1}}\}$ which is the $n$-th jump time of the Markov chain $Y$. Then let us set for any $n\leq \sup\{n:\tau_n<\infty\}$, $$Z_n=Y_{\tau_n},$$ which is the sequence of states visited by the Markov chain $(Y_n)_{n\in\mathbb{Z}_+}$, and set for any $y\in\{0,1,...,\}$, $T^Z_y=\tau_{T_y^Y}=\inf\{n:Z_n=y\}$, and $T^Z_{y,z}=\inf\{n:Z_n\in\{y,z\}\}$.

Then the stochastic process $(Z_n)_{n\in\mathbb{N}}$ follows the 

\begin{proposition}\label{prop-Z}
The stochastic process $(Z_n)_{n\in\mathbb{N}}$ is a simple biased random walk absorbed in $0$ and $N$ : as long as $Z_n\in\{1,...,N-1\}$, $Z_{n+1}\in\{Z_n-1,Z_n+1\}$ and $$\mathbb{P}(Z_{n+1}=Z_n+1)=\frac{1+s}{2+s}.$$
\end{proposition}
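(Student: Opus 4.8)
The plan is to deduce the law of $(Z_n)$ directly from Proposition \ref{prop:Y} by conditioning on the event that the Markov chain $(Y_n)$ actually jumps. The process $Z$ records the successive distinct values taken by $Y$, so its transition probabilities are exactly the transition probabilities of $Y$ conditioned on a move occurring. First I would observe that, since $(Y_n)$ is a (time-homogeneous) Markov chain whose one-step increments from a state $k\in\{1,\dots,N-1\}$ lie in $\{-1,0,+1\}$ with the probabilities given in Proposition \ref{prop:Y}, the embedded jump chain $(Z_n)$ is again a Markov chain on $\{0,1,\dots,N\}$, with $Z_0=Y_0$ and, for $k\in\{1,\dots,N-1\}$,
\begin{equation*}
\mathbb{P}(Z_{n+1}=k+1\mid Z_n=k)=\frac{\mathbb{P}(Y_{m+1}=k+1\mid Y_m=k)}{\mathbb{P}(Y_{m+1}\ne k\mid Y_m=k)},\qquad
\mathbb{P}(Z_{n+1}=k-1\mid Z_n=k)=\frac{\mathbb{P}(Y_{m+1}=k-1\mid Y_m=k)}{\mathbb{P}(Y_{m+1}\ne k\mid Y_m=k)}.
\end{equation*}
This uses the strong Markov property of $(Y_n)$ at the stopping times $\tau_n$, and the fact that from any $k\in\{1,\dots,N-1\}$ one has $p_k>0$, so the chain leaves $k$ in finite time almost surely and the conditioning is well defined.

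The second step is the actual computation, which is immediate. By Proposition \ref{prop:Y}, $\mathbb{P}(Y_{m+1}\ne k\mid Y_m=k)=p_k$, while $\mathbb{P}(Y_{m+1}=k+1\mid Y_m=k)=p_k\,\frac{1+s}{2+s}$ and $\mathbb{P}(Y_{m+1}=k-1\mid Y_m=k)=p_k\,\frac{1}{2+s}$. The factor $p_k$ cancels, leaving
\begin{equation*}
\mathbb{P}(Z_{n+1}=k+1\mid Z_n=k)=\frac{1+s}{2+s},\qquad \mathbb{P}(Z_{n+1}=k-1\mid Z_n=k)=\frac{1}{2+s},
\end{equation*}
for every $k\in\{1,\dots,N-1\}$. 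In particular these transition probabilities do not depend on $k$, so $(Z_n)$ is a spatially homogeneous nearest-neighbour random walk on the interior, biased upward with probability $\frac{1+s}{2+s}$. Finally, since $0$ and $N$ are absorbing for $(Y_n)$ (because $p_0=p_N=0$), they are absorbing for $(Z_n)$ as well — equivalently, $T^Z_{0,N}$ is the last finite jump time and $Z$ is constant thereafter — which completes the description.

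I do not expect a genuine obstacle here: the only point requiring a word of care is the well-posedness of the embedded chain, i.e. checking that $(Y_n)$ leaves each transient state in finite time so that the $\tau_n$ are almost surely finite up to absorption, and that the conditioning producing the ratios above is legitimate; this is exactly the standard construction of the jump chain of a discrete-time Markov chain with holding, and it is justified by $p_k>0$ for $k\in\{1,\dots,N-1\}$ together with the strong Markov property. Everything else is the one-line cancellation of $p_k$.
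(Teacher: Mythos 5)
Your proposal is correct and follows essentially the same route as the paper: the paper computes $\mathbb{P}(Z_{n+1}=Z_n+1\mid Z_n=k)$ by summing the geometric series $\sum_{l\ge 0}(1-p_k)^l\,p_k\,\frac{1+s}{2+s}$ over the holding time in state $k$, which is exactly your conditioning-on-a-jump argument written out explicitly, with the same cancellation of $p_k$. No gap; your extra remarks on well-posedness (finiteness of the jump times via $p_k>0$ and the strong Markov property) are sound and merely make explicit what the paper leaves implicit.
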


\begin{proof} By decomposing according to the duration spent by the Markov chain $(Y_n)_{n\in\mathbb{N}}$ in state $k\in[\![1,N]\!]$, one has :
$$\mathbb{P}(Z_{n+1}=Z_n+1|Z_n=k)=\sum_{l=0}^{\infty}(1-p_k)^l\times \frac{1+s}{2+s}\times p_k=\frac{1+s}{2+s}.$$
\end{proof}

\medskip

\subsection{Mean weight of advantaged individuals}


Our aim in this work is to quantify the effect of selection on the contribution of an ancestor to the genome of the population. As mentioned in Section \ref{sec:IntroResult}, we therefore focus on the following quantities :

\begin{equation*}
\Xi^A_n=\frac{\sum_{l\in\mathcal{Y}_n}\sum_{l'\in\mathcal{Y}_0} A_n(l,l')}{Y_n}\quad\quad \text{and} \quad\quad
\Xi^B_n=\frac{\sum_{l\notin\mathcal{Y}_n} \sum_{l'\in\mathcal{Y}_0}A_n(l,l')}{N-Y_n}
\end{equation*}

that are respectively the probability that a gene sampled uniformly among advantaged (resp. disadvantaged) individuals, comes from an initially advantaged individual. 
Indeed, denoting by  $\mathcal{U}(\mathcal{S})$ the uniform law on any set $\mathcal{S}\subseteq I$,

\begin{equation}
\begin{aligned}
\sum_{l\in\mathcal{Y}_n}\sum_{l'\in\mathcal{Y}_0} A_n(l,l')&=\sum_{l\in\mathcal{Y}_n}\sum_{l'\in\mathcal{Y}_0} \mathbb{P}(X_n^{(n)}=l'|X_0^{(n)}=l,\mathcal{G}_N)\\&=\sum_{l\in\mathcal{Y}_n}\mathbb{P}(X_n^{(n)}\in\mathcal{Y}_0|X_0^{(n)}=l,\mathcal{G}_N)\\&=Y_n\mathbb{P}(X_n^{(n)}\in\mathcal{Y}_0|X_0^{(n)}\sim\mathcal{U}(\mathcal{Y}_n),\mathcal{Y}_n,\mathcal{G}_N).
\end{aligned}
\end{equation}
 Therefore $$\Xi^A_n=\mathbb{P}(X_n^{(n)}\in\mathcal{Y}_0|X_0^{(n)}\sim\mathcal{U}(\mathcal{Y}_n),\mathcal{Y}_n,\mathcal{G}_N),$$ and similarly
$$
\Xi^B_n=\mathbb{P}(X_n^{(n)}\in\mathcal{Y}_0|X_0^{(n)}\sim\mathcal{U}(\mathcal{Y}^c_n),\mathcal{Y}_n,\mathcal{G}_N).
$$

These two quantities give a measure of the quantity of genome that comes from the initially advantaged population, respectively among advantaged and disadvantaged individuals, knowing the pedigree, i.e. the parental relationship between individuals. We are particularly interested in the order of magnitude of $\Xi^A_n$ and $\Xi^B_n$, when $n$ goes to infinity, and when the population is large. The case of greatest interest is when the advantageous mutation ends up invading all the population, i.e. when $T^Y_N<\infty$, which happens with probability almost $1$ as soon as $Y_0\geq a N$ for a positive $a$ and large $N$. In this article we will then aim at studying $\mathbb{E}(\Xi^A_{T^{Y}_N}\mathbf{1}_{T^Y_n<\infty})$.

To this aim let us define $(\mathcal{F}_n^Y)_{n\in\mathbb{Z}_+}$ the natural filtration associated to the stochastic process $(Y_n)_{n\in\mathbb{Z}_+}$, and
%
%
%
%
%
%
%
%

\begin{equation}
\begin{aligned}
U_n&=\mathbb{E}\Big(\Xi_n^A\Big|\mathcal{F}_n^Y\Big)\\
V_n&=\mathbb{E}\Big(\Xi^B_n\Big|\mathcal{F}_n^Y\Big).
\end{aligned}
\end{equation}
As previously, the quantity $U_{n}$ (resp. $V_{n}$) gives the probability, given $\mathcal{F}^Y_n$,  that a gene sampled among advantaged (resp. disadvantaged) individuals at time $n$, comes from any advantaged individual at time $0$. Note that  $$U_0=1\quad\quad \text{and}\quad\quad V_0=0.$$
These two stochastic processes are useful because they satisfy the following proposition :

\begin{proposition}\label{prop:rec-uv}
For $n<\inf(T_{0}^Y,T_N^Y)-1$, the sequence $ \begin{pmatrix}
U_{n} \\ 
V_{n}
\end{pmatrix}$ satisfies the following dynamics, assuming that $Y_n=k$ :

\begin{itemize}
\item If $Y_{n+1}=Y_n+1$ then $ \begin{pmatrix}
U_{n+1} \\ 
V_{n+1}
\end{pmatrix}=\bar{B}_{Y_n}^+  \begin{pmatrix}
U_{n} \\ 
V_{n}
\end{pmatrix}$ where \begin{equation}\label{eq-formB+}\bar{B}_k^+= \begin{pmatrix}
1-\frac{N-k}{2N(k+1)} & \frac{N-k}{2N(k+1)} \\ 
0 & 1
\end{pmatrix}.\end{equation}

\item  If $Y_{n+1}=Y_n-1$ then $ \begin{pmatrix}
U_{n+1} \\ 
V_{n+1}
\end{pmatrix}=\bar{B}_{Y_n}^-  \begin{pmatrix}
U_{n} \\ 
V_{n}
\end{pmatrix}$ where \begin{equation}\label{eq-formB-}\bar{B}_k^-= \begin{pmatrix}
1 & 0 \\ 
\frac{k}{2N(N-k+1)} & 1-\frac{k}{2N(N-k+1)}
\end{pmatrix}.\end{equation}

\item If $Y_{n+1}=Y_n$ then $ \begin{pmatrix}
U_{n+1} \\ 
V_{n+1}
\end{pmatrix}=\bar{B}_{Y_n}^{(0)}  \begin{pmatrix}
U_{n} \\ 
V_{n}
\end{pmatrix}$, where \begin{equation}\label{eq-formB0}\begin{aligned}\bar{B}_k^{(0)}&=\frac{k^2}{k^2+(1+s)(N-k)^2} \begin{pmatrix}
1-\frac{N-k}{2kN} & \frac{N-k}{2kN} \\ 
0 & 1
\end{pmatrix}\\&+\frac{(1+s)(N-k)^2}{k^2+(1+s)(N-k)^2} \begin{pmatrix}
1 & 0 \\ \frac{k}{2N(N-k)} & 1-\frac{k}{2N(N-k)}
\end{pmatrix}
\end{aligned}\end{equation}

\end{itemize}
\end{proposition}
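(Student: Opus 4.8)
The plan is to derive the recursion by tracking how a single generation of the model affects the gene genealogy, and then averaging. For $i\in I$ and $n\ge 1$, introduce the (random, $\mathcal{F}_{n-1}$-measurable) quantity
\[
\phi_n(i)=\sum_{l'\in\mathcal{Y}_0}A_n(i,l')=\mathbb{P}\big(X^{(n)}_n\in\mathcal{Y}_0\mid X^{(n)}_0=i,\mathcal{G}_N\big),
\]
the probability that a gene carried by individual $i$ at time $n$ descends from the initially advantaged set, so that $\Xi^A_n=\frac{1}{Y_n}\sum_{l\in\mathcal{Y}_n}\phi_n(l)$ and $\Xi^B_n=\frac{1}{N-Y_n}\sum_{l\notin\mathcal{Y}_n}\phi_n(l)$. (Here $\phi_n$, $\mathcal{Y}_n$ and hence $\Xi^A_n,\Xi^B_n$ depend only on the pedigree arrows strictly below generation $n$, hence are $\mathcal{F}_{n-1}$-measurable, which matters below since $\mu_n,\pi_n,\kappa_n$ must remain to be averaged over.) The structural fact driving the proof is that, from generation $n$ to $n+1$, only the newborn at site $\kappa_n$ acquires a new ancestor: the gene genealogy started at $(i,n+1)$ first moves to $(i,n)$ when $i\neq\kappa_n$, and to $(\mu_n,n)$ or $(\pi_n,n)$ (each with probability $\tfrac12$) when $i=\kappa_n$, and then evolves below generation $n$ exactly as $X^{(n)}$; hence $\phi_{n+1}(i)=\phi_n(i)$ for all $i\neq\kappa_n$ and $\phi_{n+1}(\kappa_n)=\tfrac12\phi_n(\mu_n)+\tfrac12\phi_n(\pi_n)$.

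Using this, I would first write $\Xi^A_{n+1}$ and $\Xi^B_{n+1}$ explicitly in each transition regime; the hypothesis $n<\inf(T^Y_0,T^Y_N)-1$ ensures $Y_n$ and $Y_{n+1}$ both lie in $\{1,\dots,N-1\}$, so all the denominators are nonzero. For instance, on $\{Y_{n+1}=Y_n+1=k+1\}$ one has $\mathcal{Y}_{n+1}=\mathcal{Y}_n\cup\{\kappa_n\}$ with $\kappa_n\notin\mathcal{Y}_n$ and $\mu_n\in\mathcal{Y}_n$, so
\[
\Xi^A_{n+1}=\frac{1}{k+1}\Big(k\,\Xi^A_n+\tfrac12\phi_n(\mu_n)+\tfrac12\phi_n(\pi_n)\Big),\qquad
\Xi^B_{n+1}=\frac{1}{N-k-1}\sum_{l\in\mathcal{Y}_n^c\setminus\{\kappa_n\}}\phi_n(l).
\]
The case $Y_{n+1}=Y_n-1$ is symmetric (exchange the roles of $\mathcal{Y}_n$ and $\mathcal{Y}_n^c$), and on $\{Y_{n+1}=Y_n\}$ one splits further according to whether the mother and the replaced individual are both advantaged (so $\kappa_n\in\mathcal{Y}_n$, $\mu_n\in\mathcal{Y}_n$, $\mathcal{Y}_{n+1}=\mathcal{Y}_n$) or both disadvantaged, getting in each sub-case an identity of the same shape.

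Then I would take $\mathbb{E}(\,\cdot\mid\mathcal{F}^Y_{n+1})$ of these identities, using two ingredients. First, because the transition rule for $(Y_n)$ obtained in the proof of Proposition~\ref{prop:Y} depends on the current configuration only through $Y_n$, the increment $Y_{n+1}-Y_n$ is conditionally independent of $\mathcal{F}_n$ given $Y_n$; since $\Xi^A_n,\Xi^B_n$ are $\mathcal{F}_{n-1}$-measurable, this yields $\mathbb{E}(\Xi^A_n\mid\mathcal{F}^Y_{n+1})=U_n$ and $\mathbb{E}(\Xi^B_n\mid\mathcal{F}^Y_{n+1})=V_n$. Second, conditionally on $\mathcal{F}_{n-1}$ and on the event fixing the regime, the triple $(\mu_n,\pi_n,\kappa_n)$ has an explicit conditional law — e.g. on $\{Y_{n+1}=k+1\}$, $\mu_n$ is uniform on $\mathcal{Y}_n$ and $\kappa_n$ uniform on $\mathcal{Y}_n^c$ (all death weights within a type being equal), these being independent and $\pi_n$ uniform on $I$ and independent of both — so that $\mathbb{E}[\phi_n(\mu_n)\mid\mathcal{F}_{n-1},Y_{n+1}=k+1]=\Xi^A_n$, $\mathbb{E}[\phi_n(\kappa_n)\mid\mathcal{F}_{n-1},Y_{n+1}=k+1]=\Xi^B_n$ and $\mathbb{E}[\phi_n(\pi_n)\mid\mathcal{F}_{n-1},Y_{n+1}=k+1]=\frac{k}{N}\Xi^A_n+\frac{N-k}{N}\Xi^B_n$; in the stationary regime the conditional weights of the two sub-cases are the deterministic quantities $\frac{k^2}{k^2+(1+s)(N-k)^2}$ and $\frac{(1+s)(N-k)^2}{k^2+(1+s)(N-k)^2}$, namely the ratios of the sub-case probabilities given $\mathcal{F}_{n-1}$ to $1-p_k$. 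Substituting, replacing the surviving $\Xi^A_n,\Xi^B_n$ by $U_n,V_n$, and simplifying the scalar coefficients, one recovers $\bar{B}_k^{+}$ and $\bar{B}_k^{-}$, and in the stationary regime the stated convex combination defining $\bar{B}_k^{(0)}$. The main obstacle is bookkeeping: one must order the successive conditionings (first on $\mathcal{F}_{n-1}$ and the regime, then on $\mathcal{F}^Y_{n+1}$) so that the substitution $\Xi^A_n\to U_n$, $\Xi^B_n\to V_n$ is legitimate — this is exactly where the strong Markov property of $(Y_n)$ stated above is used — whereas the algebraic check that the resulting coefficients equal the entries of $\bar{B}_k^{+}$, $\bar{B}_k^{-}$ and $\bar{B}_k^{(0)}$ is elementary.
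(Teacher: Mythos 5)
Your argument is correct and follows essentially the same route as the paper's proof: decompose the gene sampled at time $n+1$ according to whether it sits in a pre-existing individual or in the newborn at $\kappa_n$, split the newborn's gene equally between mother and father, and use the fact that, conditionally on the transition regime, the mother and the replaced individual are uniform within their respective type classes; your version merely makes explicit the measurability and conditional-independence bookkeeping that the paper leaves implicit. The only blemish is the phrase stating that $Y_{n+1}-Y_n$ is conditionally independent of $\mathcal{F}_n$ given $Y_n$, which with the paper's indexing should read $\mathcal{F}_{n-1}$ (since $(\mu_n,\pi_n,\kappa_n)\in\mathcal{F}_n$ already determines the increment); you in fact use the correct version where it matters, when you invoke the $\mathcal{F}_{n-1}$-measurability of $\Xi^A_n$ and $\Xi^B_n$.
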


As mentioned previously, this gives that if $T_{0}^Y<T_{N}^Y$ then $$ \begin{pmatrix}
U_{T_0^Y} \\ 
V_{T_0^Y}
\end{pmatrix}= \begin{pmatrix}
1& 0 \\ \frac{1}{2N^2}  & \frac{2N^2-1}{2N^2} 
\end{pmatrix} \begin{pmatrix}
U_{T_0^Y-1} \\ 
V_{T_0^Y-1}
\end{pmatrix}$$

and if $T_{N}^Y<T_{0}^Y$ then$ \begin{pmatrix}
U_{T_N^Y} \\ 
V_{T_N^Y}
\end{pmatrix}= \begin{pmatrix}
\frac{2N^2-1}{2N^2} & \frac{1}{2N^2} \\ 
0 & 1
\end{pmatrix} \begin{pmatrix}
U_{T_N^Y-1} \\ 
V_{T_N^Y-1}
\end{pmatrix}$.

\begin{proof} Recall that $U_n=\mathbb{P}(X_n^{(n)}\in\mathcal{Y}_0|X_0^{(n)}\sim \mathcal{U}(\mathcal{Y}_n),\mathcal{Y}_n,\mathcal{F}_n^Y)$ is the equal to the probability that a gene sampled uniformly in advantaged individuals at time $n$ comes from an advantaged individual at time $0$ (backwards in time). Similarly, $V_n$ is equal to the probability that a gene sampled uniformly in non-advantaged individuals at time $n$ comes from an advantaged individual at time $0$.

\medskip
Now if $Y_{n+1}=Y_n+1$ then as mentioned in the proof of Proposition \ref{prop:Y}, it means that at time $n$ an advantaged individual is chosen to be the mother, and a non advantaged individual is chosen to die. Therefore if $Y_n=k$ and $Y_{n+1}=Y_n+1=k+1$, then a gene sampled uniformly among advantaged individuals at time $n+1$ is either carried by an advantaged individual already present at time $n$ (with probability $k/(k+1)$), or is sampled in the newly advantaged individual (with probability $1/(k+1)$). In the first case a gene sampled in the individual comes from an advantaged individual at time $0$ with probability $U_n$. In the second case, it comes from an advantaged individual at time $0$ with probability $1/2\times U_n+1/2\times(\frac{k}{N}U_n+\frac{N-k}{N}V_n)=U_n(1-\frac{N-k}{2N})+V_n\frac{N-k}{2N}$. The first term of the left-hand side of the previous calculation corresponds to the case where the gene sampled at time $n+1$ comes from the (advantaged) mother of the considered individual (which occurs with probability $1/2$), and the second term corresponds to the case where the sampled gene comes from the father. Therefore \begin{align*}U_{n+1}&=\frac{k}{k+1}U_n+\frac{1}{k+1}\times\left(U_n\left(1-\frac{N-k}{2N}\right)+V_n\frac{N-k}{2N}\right)\\&=U_n\left(1-\frac{N-k}{2N(k+1)}\right)+V_n\frac{N-k}{2N(k+1)}.\end{align*} On the contrary, a gene sampled uniformly among non-advantaged individuals at time $n+1$ comes necessarily from an individual that was already non advantaged at time $n$, if $Y_{n+1}=Y_n+1$. Therefore if $Y_{n+1}=Y_n+1$, $V_{n+1}=V_n$. This ends the first point of the proposition.

\medskip
In the second case where $Y_{n+1}=Y_n-1$, the mother chosen at time $n$ must be a non-advantaged individual, while the dead individual must be an advantaged mutant. Therefore if $Y_{n+1}=Y_n-1$ and $Y_n=k$, then a gene sampled uniformly among advantaged individuals at time $n+1$ is necessarily carried by an advantaged individual already present at time $n$, which gives that $U_{n+1}=U_n$. Now a gene sampled uniformly at time $n+1$ among non advantaged individuals is either sampled among the previously non advantaged individuals (with probability $(N-k)/(N-k+1)$), or is sampled in the newly non advantaged individual (with probability $1/(N-k+1)$). In the first case this gene comes from an advantaged individual at time $0$ with probability $V_n$, and in the second case this gene comes from an advantaged individual with probability $1/2\times V_n+1/2\times(\frac{k}{N}U_n+\frac{N-k}{N}V_n)=U_n\frac{k}{2N}+V_n(1-\frac{k}{2N})$.
Therefore in the end when $Y_{n+1}=Y_n-1=k-1$, 
\begin{align*}V_{n+1}&=\frac{N-k}{N-k+1}V_n+\frac{1}{N-k+1}\times\left(U_n\frac{k}{2N}+V_n\left(1-\frac{k}{2N}\right)\right)\\&=U_n\frac{k}{2N(N-k+1)}+V_n\left(1-\frac{k}{2N(N-k+1)}\right)\end{align*}
which ends the second point of the proposition.

\medskip

In the last case where $Y_{n+1}=Y_n=k$, the mother and dying individual chosen at time $n$ are either both advantaged or either both non advantaged. Since the first event occurs with probability $\frac{k}{N}\times \frac{k}{k+(1+s)(N-k)}$ and the second event occurs with probability $\frac{(N-k)}{N}\times \frac{(1+s)(N-k)}{k+(1+s)(N-k)}$ this gives that the probability of the first event conditioned on the fact that $Y_{n+1}=Y_n=k$ is equal to $\frac{k^2}{k^2+(1+s)(N-k)^2}$ and the probability of the second event conditioned on the fact that $Y_{n+1}=Y_n=k$ is equal to $\frac{(1+s)(N-k)^2}{k^2+(1+s)(N-k)^2}$. Let us now assume that both the mother and the dying individual at time $n$ are advantaged. Then if one gene is sampled among non advantaged individuals at time $n+1$ then it is necessarily carried by a non advantaged individual already present at time $n$. So $V_{n+1}=V_n$. If a gene is sampled among advantaged individuals at time $n+1$ then it is either sampled in the new individual (with probability $1/k$) in which case it comes from an advantaged individual at time $0$ with probability $1/2\times U_n+1/2\times(\frac{k}{N}U_n+\frac{N-k}{N}V_n)=U_n\times(1-\frac{N-k}{2N})+\frac{N-k}{2N}V_n$ or it is sampled in an other advantaged individual  (with probability $(k-1)/k$) in which case it comes from an advantaged individual at time $0$ with probability $U_n$. In the end this gives that in this particular case where $Y_{n+1}=Y_n=k$ and both the mother and the dying individual at time $n$ are advantaged, \begin{align*}U_{n+1}&=\frac{k-1}{k}U_n+\frac{1}{k}\left(U_n\left(1-\frac{N-k}{2N}\right)+\frac{N-k}{2N}V_n\right)\\&=U_n\left(1-\frac{N-k}{2kN}\right)+V_n\frac{N-k}{2kN}.\end{align*}

Let us finally assume that both the mother and the dying individual at time $n$ are non advantaged individuals. Then if one gene is sampled among advantaged individuals at time $n+1$ then it necessarily is the copy of a gene already present in an advantaged individual of time $n$. So $U_{n+1}=U_n$. If a gene is sampled among non advantaged individuals at time $n+1$ then it is either sampled in the new individual (with probability $1/(N-k)$) in which case it comes from an advantaged individual at time $0$ with probability $1/2\times V_n+1/2\times(\frac{k}{N}U_n+\frac{N-k}{N}V_n)=U_n\frac{k}{2N}+V_n\times(1-\frac{k}{2N})$ or it is sampled in an other non advantaged individual  (with probability $(N-k-1)/(N-k)$) in which case it comes from an advantaged individual at time $0$ with probability $V_n$. In the end this gives that in this particular case where $Y_{n+1}=Y_n=k$ and both the mother and the dying individual at time $n$ are non advantaged individuals, \begin{align*}V_{n+1}&=\frac{N-k-1}{N-k}V_n+\frac{1}{N-k}\left(U_n\frac{k}{2N}+V_n\times\left(1-\frac{k}{2N}\right)\right)\\&=U_n\frac{k}{2N(N-k)}+V_n\left(1-\frac{k}{2N(N-k)}\right).\end{align*}
This ends the proof.
\end{proof}

\medskip


%

\begin{proposition}
\begin{description}
\item[$(i)$]  For each $k\in[\![1,N-1]\!]$, the matrices $\bar{B}_k^{(0)}$, $\bar{B}_k^+$ and $\bar{B}_k^-$ are stochastic, i.e. they admit the vector $\begin{pmatrix}1 \\ 1 \end{pmatrix}$ as eigenvector with eigenvalue $1$.

\item[$(ii)$] Being $2\times2$ stochastic matrices, for each $k\in[\![1,N-1]\!]$, the matrices $^t\bar{B}_k^{(0)}$, $^t\bar{B}_k^+$ and $^t\bar{B}_k^-$, admit the vector $\begin{pmatrix}-1 & 1 \end{pmatrix}$ as eigenvector. The respective associated eigenvalues are $l_0(k)=1-\frac{(2+s)k(N-k)}{2N(k^2+(1+s)(N-k)^2)}$, $l_+(k)=1-\frac{N-k}{2N(k+1)} $ and $l_-(k)=1-\frac{k}{2N(N-k+1)}$.
\end{description}

\end{proposition}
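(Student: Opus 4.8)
The plan is to verify both items by direct computation, using two elementary facts about $2\times2$ stochastic matrices.

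For item $(i)$ I would simply check that each of the matrices $\bar B_k^+$, $\bar B_k^-$, $\bar B_k^{(0)}$ has all rows summing to $1$, equivalently $\bar B\,{}^t(1\ 1)={}^t(1\ 1)$. For $\bar B_k^+$ and $\bar B_k^-$ this is immediate from the explicit forms \eqref{eq-formB+} and \eqref{eq-formB-}. For $\bar B_k^{(0)}$ I would observe from \eqref{eq-formB0} that it is written as $\theta_k M_k^{(1)}+(1-\theta_k)M_k^{(2)}$ with $\theta_k=\frac{k^2}{k^2+(1+s)(N-k)^2}\in[0,1]$ and with $M_k^{(1)},M_k^{(2)}$ each having rows summing to $1$; a convex combination of such matrices again has rows summing to $1$. (If one also wants nonnegative entries, it suffices to note every off-diagonal entry occurring is at most $\frac{N-k}{2N}$ or $\frac{k}{2N}$, hence at most $\tfrac12$, using $1\le k\le N-1$.) This settles $(i)$.

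For item $(ii)$ I would use the standard fact that a $2\times2$ matrix $M$ with rows summing to $1$ has eigenvalues $1$ and $\operatorname{tr}(M)-1$: the product of the eigenvalues is $\det M$ and their sum is $\operatorname{tr}M$, while $1$ is an eigenvalue by $(i)$. Since $(1\ 1)^t$ is the right eigenvector of $M$ for the eigenvalue $1$, the eigenvector of ${}^tM$ for the other eigenvalue $\operatorname{tr}(M)-1$ must be orthogonal to $(1\ 1)^t$, hence proportional to $(-1\ 1)^t$; alternatively one checks directly that ${}^tM\,(-1\ 1)^t=(\operatorname{tr}(M)-1)\,(-1\ 1)^t$ for any such $M$. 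It then remains only to read off the traces. For $\bar B_k^+$ and $\bar B_k^-$ this is immediate and yields $l_+(k)=1-\frac{N-k}{2N(k+1)}$ and $l_-(k)=1-\frac{k}{2N(N-k+1)}$. For $\bar B_k^{(0)}$, linearity of the trace gives
$$\operatorname{tr}\bar B_k^{(0)}-1 = 1-\theta_k\frac{N-k}{2kN}-(1-\theta_k)\frac{k}{2N(N-k)},$$
and substituting $\theta_k=\frac{k^2}{k^2+(1+s)(N-k)^2}$ and putting the two fractions over the common denominator $2N\big(k^2+(1+s)(N-k)^2\big)$ collapses the numerator to $(1+(1+s))k(N-k)=(2+s)k(N-k)$, giving exactly $l_0(k)=1-\frac{(2+s)k(N-k)}{2N(k^2+(1+s)(N-k)^2)}$.

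There is no genuine obstacle here: the statement is a chain of elementary verifications. The only place requiring a little algebra is the final simplification of $\operatorname{tr}\bar B_k^{(0)}$, where one must pull out the factor $2+s$ after reducing the two contributions to a common denominator; everything else is read directly from the matrix entries.
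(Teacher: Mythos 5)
Your proof is correct and follows essentially the same route as the paper, which simply declares both points ``immediate'' from the explicit formulas \eqref{eq-formB+}, \eqref{eq-formB-} and \eqref{eq-formB0}; you merely carry out the direct verification the paper leaves implicit, and your trace computation for $\bar{B}_k^{(0)}$ correctly recovers $l_0(k)$.
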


\begin{proof}
Both points of the proposition are immediate, recalling the formulas \eqref{eq-formB0}, \eqref{eq-formB+} and \eqref{eq-formB-} for the respective matrices $\bar{B}_k^{(0)}$, $\bar{B}_k^+$ and $\bar{B}_k^-$.

\end{proof}

\medskip
It is simpler to consider only the times at which the Markov chain $(Y_n)_{n\leq \inf(T_0^Y,T_N^Y)}$ jumps, and set

$$\begin{pmatrix}
\widetilde{U}_{n} \\ 
\widetilde{V}_{n}
\end{pmatrix}=\begin{pmatrix}
\mathbb{E}(U_{\tau_n}|\mathcal{F}^Z_n)\\ 
\mathbb{E}(V_{\tau_n}|\mathcal{F}^Z_n)
\end{pmatrix}.$$

Proposition \ref{prop:rec-uv} gives that

\begin{proposition}\label{prop:rec-uv-changed}

For $n<\inf(T_{0}^Y,T_N^Y)$, the sequence $\begin{pmatrix}
\widetilde{U}_{n} \\ 
\widetilde{V}_{n}
\end{pmatrix}$ satisfies the following dynamic. Assuming that $Z_n=k$,

\begin{itemize}
\item If $Z_{n+1}=Z_n+1$ then $\begin{pmatrix}
\widetilde{U}_{n+1} \\ 
\widetilde{V}_{n+1}
\end{pmatrix}=\widetilde{B}_{Z_n}^+ \begin{pmatrix}
\widetilde{U}_{n} \\ 
\widetilde{V}_{n}
\end{pmatrix}$ where $$\widetilde{B}_k^+= \begin{pmatrix}
1-\frac{N (s+2)-k (s+1)+1}{(k+1) (2 N+1) (s+2)}& \frac{N (s+2)-k (s+1)+1}{(k+1) (2 N+1) (s+2)} \\
 \frac{s+1}{(2N+1)(s+2)} & 1- \frac{s+1}{(2N+1)(s+2)}\\
\end{pmatrix}.$$

\item  If $Z_{n+1}=Z_n-1$ then $ \begin{pmatrix}
\widetilde{U}_{n+1} \\ 
\widetilde{V}_{n+1}
\end{pmatrix}=\widetilde{B}_{Z_n}^-  \begin{pmatrix}
\widetilde{U}_{n} \\ 
\widetilde{V}_{n}
\end{pmatrix}$ where $$\widetilde{B}_k^-= \begin{pmatrix}
1- \frac{1}{(2N+1)(s+2)} & \frac{1}{(2N+1)(s+2)} \\
 \frac{k+(N+1)(1+s)}{(2 N+1) (s+2) (N-k+1)} &1-  \frac{k+(N+1)(1+s)}{(2 N+1) (s+2) (N-k+1)} \\
\end{pmatrix}.$$

\item The eigenvalues associated to the left eigenvector $\begin{pmatrix}1&-1\end{pmatrix}$ of  $\widetilde{B}_k^+$ and $\widetilde{B}_k^+$ are respectively equal to $\lambda_+(k)=1-\frac{N+1}{(2N+1)(k+1)}$ and $\lambda_-(k)=1-\frac{N+1}{(2N+1)(N-k+1)}=\lambda_+(N-k)$.
\end{itemize}
\end{proposition}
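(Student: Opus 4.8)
The plan is to realise one step of the walk $(Z_n)$ as a random product of the elementary matrices of Proposition~\ref{prop:rec-uv} and then to take expectations. Fix $n<\inf(T_0^Y,T_N^Y)$, put $k=Z_n\in\{1,\dots,N-1\}$ and $L=\tau_{n+1}-\tau_n$. On the event $\{Z_n=k,\,Z_{n+1}=k+1\}$ the chain $(Y_m)$ makes, between times $\tau_n$ and $\tau_{n+1}$, exactly $L-1$ successive ``stay'' transitions at height $k$ followed by one ``up'' transition; iterating the almost sure recursion of Proposition~\ref{prop:rec-uv} (together with its boundary version, stated just after it, which is needed only when $Z_{n+1}\in\{0,N\}$, where the boundary matrices are exactly $\bar B_1^{-}$ and $\bar B_{N-1}^{+}$) gives
\begin{equation*}
\begin{pmatrix}U_{\tau_{n+1}}\\ V_{\tau_{n+1}}\end{pmatrix}
=\bar B_k^{+}\,\bigl(\bar B_k^{(0)}\bigr)^{L-1}\begin{pmatrix}U_{\tau_n}\\ V_{\tau_n}\end{pmatrix},
\end{equation*}
and the same identity with $\bar B_k^{-}$ in place of $\bar B_k^{+}$ on $\{Z_n=k,\,Z_{n+1}=k-1\}$.

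Next I would apply $\mathbb{E}(\,\cdot\mid\mathcal{F}^Z_{n+1})$ to both sides. Since $\mathcal{F}^Z_n\subseteq\mathcal{F}^Y_{\tau_n}$ and $(U_{\tau_n},V_{\tau_n})$ is $\mathcal{F}^Y_{\tau_n}$-measurable, the strong Markov property of $(Y_m)$ at the stopping time $\tau_n$ gives, on $\{Z_n=k\}$: (a) $\mathbb{E}\bigl((U_{\tau_n},V_{\tau_n})\mid\mathcal{F}^Z_{n+1}\bigr)=(\widetilde{U}_n,\widetilde{V}_n)$; and (b) conditionally on $\{Z_n=k,\,Z_{n+1}=k+1\}$ (resp.\ on $\{Z_n=k,\,Z_{n+1}=k-1\}$) the waiting time satisfies $\mathbb{P}(L=l+1\mid\cdot)=(1-p_k)^l p_k$ for $l\ge0$ --- the direction of the jump being independent of how long the chain waited, by Proposition~\ref{prop:Y} --- and $L$ is independent of $(U_{\tau_n},V_{\tau_n})$. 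Substituting and summing the resulting geometric series of matrices,
\begin{equation*}
\begin{pmatrix}\widetilde{U}_{n+1}\\ \widetilde{V}_{n+1}\end{pmatrix}
=\bar B_k^{+}\Bigl(\sum_{l\ge0}(1-p_k)^l p_k\bigl(\bar B_k^{(0)}\bigr)^l\Bigr)\begin{pmatrix}\widetilde{U}_n\\ \widetilde{V}_n\end{pmatrix}
=\bar B_k^{+}\,p_k\bigl(I-(1-p_k)\bar B_k^{(0)}\bigr)^{-1}\begin{pmatrix}\widetilde{U}_n\\ \widetilde{V}_n\end{pmatrix}
\end{equation*}
on $\{Z_n=k,\,Z_{n+1}=k+1\}$, the series converging because $\bar B_k^{(0)}$ is stochastic, so $(1-p_k)\bar B_k^{(0)}$ has spectral radius $1-p_k<1$. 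This identifies $\widetilde B_k^{+}=\bar B_k^{+}\,p_k\bigl(I-(1-p_k)\bar B_k^{(0)}\bigr)^{-1}$ and, likewise, $\widetilde B_k^{-}=\bar B_k^{-}\,p_k\bigl(I-(1-p_k)\bar B_k^{(0)}\bigr)^{-1}$.

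It then remains to make these two formulas explicit. For the matrices themselves I would invert the $2\times2$ matrix $I-(1-p_k)\bar B_k^{(0)}$ using \eqref{eq-formB0} and $p_k=\frac{(2+s)k(N-k)}{N(k+(1+s)(N-k))}$, then multiply on the left by $\bar B_k^{+}$ and $\bar B_k^{-}$ from \eqref{eq-formB+}--\eqref{eq-formB-}, and check the two products against the asserted expressions. For the eigenvalues there is a shortcut that bypasses the full products: the row vector $\begin{pmatrix}1&-1\end{pmatrix}$ is a common left eigenvector of $\bar B_k^{+}$, $\bar B_k^{-}$ and $\bar B_k^{(0)}$, with eigenvalues $l_+(k)$, $l_-(k)$, $l_0(k)$ computed earlier, hence also of $p_k(I-(1-p_k)\bar B_k^{(0)})^{-1}$ with eigenvalue $p_k\bigl(1-(1-p_k)l_0(k)\bigr)^{-1}$, so that $\lambda_\pm(k)=l_\pm(k)\,p_k\bigl(1-(1-p_k)l_0(k)\bigr)^{-1}$. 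The computation then collapses: the elementary identity $N\bigl(k+(1+s)(N-k)\bigr)-\bigl(k^2+(1+s)(N-k)^2\bigr)=(2+s)k(N-k)$ yields both $1-p_k=\frac{k^2+(1+s)(N-k)^2}{N(k+(1+s)(N-k))}$ and $p_k\bigl(1-(1-p_k)l_0(k)\bigr)^{-1}=\frac{2N}{2N+1}$ (independently of $k$ and $s$), whence $\lambda_\pm(k)=\frac{2N}{2N+1}\,l_\pm(k)$, which reduces to the stated expressions; and $\lambda_-(k)=\lambda_+(N-k)$ then follows at once from $l_-(k)=l_+(N-k)$.

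The only genuinely delicate point is fact (b): one must verify that, conditionally on $\mathcal{F}^Z_{n+1}$, the waiting time $L$ really has the plain geometric law and retains no information about $(U_{\tau_n},V_{\tau_n})$ --- this is precisely where the strong Markov property at the random time $\tau_n$ and the independence of jump direction from holding time both enter. Granting that, all that is left is the finite, if slightly lengthy, linear-algebra verification outlined above, which I would not carry out in full.
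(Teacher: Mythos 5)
Your argument is correct and its core is the same as the paper's: you decompose one step of $Z$ into a geometric number of applications of $\bar{B}_k^{(0)}$ followed by the jump matrix, sum the series, and arrive at $\widetilde{B}_k^{\pm}=p_k\bar{B}_k^{\pm}\bigl(I-(1-p_k)\bar{B}_k^{(0)}\bigr)^{-1}$, which is exactly Equations \eqref{eq:Utilde1}--\eqref{eq:Utilde2}; you are in fact more explicit than the paper about the strong Markov property at $\tau_n$ and the conditional independence of the holding time, the jump direction and $(U_{\tau_n},V_{\tau_n})$, which the paper takes for granted. Where you genuinely diverge is the verification of the explicit entries: the paper observes that $p_k\bigl(I-(1-p_k)\bar{B}_k^{(0)}\bigr)^{-1}$ is stochastic of the form $\begin{pmatrix}1-a&a\\b&1-b\end{pmatrix}$ with $b=(1+s)a$ and then computes $a$ by Mathematica, whereas your left-eigenvector shortcut is both correct and cleaner --- indeed $1-(1-p_k)l_0(k)=p_k\tfrac{2N+1}{2N}$, so the middle factor scales the $(1,-1)$-eigenvalue by $\tfrac{2N}{2N+1}$ uniformly in $k$ and $s$, giving $\lambda_\pm(k)=\tfrac{2N}{2N+1}l_\pm(k)$ as you say; moreover this same identity yields $a+b=1-\tfrac{2N}{2N+1}=\tfrac{1}{2N+1}$, which together with $b=(1+s)a$ recovers $a=\tfrac{1}{(2N+1)(2+s)}$ and hence the full matrices by hand, so your shortcut actually suffices to replace the symbolic computation you chose not to carry out. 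The only remaining incompleteness is that you leave the final multiplication by $\bar{B}_k^{\pm}$ unchecked, but that is a routine $2\times 2$ product and the paper is no more explicit about it.
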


\begin{proof} The time during which the Markov chain $(Y_n)_{n\in\mathbb{Z}_+}$ stays  in the state $k$ before jumping follows a geometric law (with value in $\mathbb{Z}_+$) with probability of success $$p_k\frac{k(N-k)}{N\left(\frac{1}{2+s}k+\frac{1+s}{2+s}(N-k)\right)}=\frac{(2+s)\times k(N-k)}{N(k+(1+s)\times(N-k))}$$ which was defined in Proposition \ref{prop:Y}.

Therefore if $Z_n=k$ and $Z_{n+1}=Z_n+1$ then \begin{align}\label{eq:Utilde1} \begin{pmatrix}
\widetilde{U}_{n+1} \\ 
\widetilde{V}_{n+1}
\end{pmatrix}&=p_k\bar{B}_k^+\sum_{l=0}^{\infty}\left((1-p_k)\bar{B}_k^{(0)}\right)^l  \begin{pmatrix}
\widetilde{U}_{n} \\ 
\widetilde{V}_{n}
\end{pmatrix}\\&=p_k\bar{B}_k^+\left(I-(1-p_k)\bar{B}_k^{(0)}\right)^{-1} \begin{pmatrix}
\widetilde{U}_{n} \\ 
\widetilde{V}_{n}
\end{pmatrix}.
\end{align} 

Hence if $Z_{n+1}=Z_n+1$ then $ \begin{pmatrix}
\widetilde{U}_{n+1} \\ 
\widetilde{V}_{n+1}
\end{pmatrix}=\widetilde{B}_{Z_n}^+ \begin{pmatrix}
\widetilde{U}_{n} \\ 
\widetilde{V}_{n}
\end{pmatrix}$ where $$\widetilde{B}_k^+=p_k\bar{B}_k^+\left(I-(1-p_k)\bar{B}_k^{(0)}\right)^{-1}.$$

As the matrix $\bar{B}_k^{(0)}$ is stochastic, so is the matrix $$p_k\left(I-(1-p_k)\bar{B}_k^{(0)}\right)^{-1}=p_k\sum_{l\geq0}(1-p_k)^l(\bar{B}_k^{(0)})^l.$$
Let us then write $$p_k\left(I-(1-p_k)\bar{B}_k^{(0)}\right)^{-1}=\begin{pmatrix}1-a&a\\b &1-b\end{pmatrix},$$ which implies that $$(1-p_k)\bar{B}_k^{(0)}=\begin{pmatrix}1+p_k\frac{1-b}{a+b-1}&-p_k\frac{a}{a+b-1}\\-p_k\frac{b}{a+b-1}&1+p_k\frac{1-a}{a+b-1}\end{pmatrix}$$

This equation, together with Equation \eqref{eq-formB0} gives first that $$\frac{a}{b}=\frac{1}{1+s}$$ and second that 
$$(1-p_k)\frac{k(N-k)}{2N(k^2+(1+s)(N-k)^2)}=-p_k\frac{a}{a(2+s)-1}.$$

Therefore \begin{equation}\label{eq-a}a=\frac{(1-p_k)k(N-k)}{(2+s)(1-p_k)k(N-k)+p_k2N(k^2+(1+s)(N-k)^2)}=\frac{1}{(2N+1)(s+2)  }\end{equation} and $$b=(1+s)a=\frac{1+s}{(2N+1)(s+2)  }$$ where the second equality in Equation \eqref{eq-a} is obtained using Mathematica (see Section \ref{sec:Mathematica}).

Now recall that 
\begin{equation}\bar{B}_k^+= \begin{pmatrix}
1-\frac{N-k}{2N(k+1)} & \frac{N-k}{2N(k+1)} \\ 
0 & 1
\end{pmatrix}\end{equation}
therefore 
$$\widetilde{B}_k^+= \begin{pmatrix}
1-\frac{N (s+2)-k (s+1)+1}{(k+1) (2 N+1) (s+2)}& \frac{N (s+2)-k (s+1)+1}{(k+1) (2 N+1) (s+2)} \\
 \frac{s+1}{(2N+1)(s+2)} & 1- \frac{s+1}{(2N+1)(s+2)}\\
\end{pmatrix}.$$

which gives the first point of the proposition.

Similarly if $Z_n=k$ and $Z_{n+1}=Z_n-1$ then \begin{align}\label{eq:Utilde2} \begin{pmatrix}
\widetilde{U}_{n+1} \\ 
\widetilde{V}_{n+1}
\end{pmatrix}&=p_k\bar{B}_k^-\sum_{l=0}^{\infty}\left((1-p_k)\bar{B}_k^{(0)}\right)^l  \begin{pmatrix}
\widetilde{U}_{n} \\ 
\widetilde{V}_{n}
\end{pmatrix}\\&=p_k\bar{B}_k^-\left(I-(1-p_k)\bar{B}_k^{(0)}\right)^{-1} \begin{pmatrix}
\widetilde{U}_{n} \\ 
\widetilde{V}_{n}
\end{pmatrix}
\end{align} 
where $I$ is the identity matrix of size $2$. Therefore, recalling that $$\bar{B}_k^-= \begin{pmatrix}
1 & 0 \\ 
\frac{k}{2N(N-k+1)} & 1-\frac{k}{2N(N-k+1)}
\end{pmatrix}$$ we get that if $Z_{n+1}=Z_n-1$ then $ \begin{pmatrix}
\widetilde{U}_{n+1} \\ 
\widetilde{V}_{n+1}
\end{pmatrix}=\widetilde{B}_{Z_n}^-  \begin{pmatrix}
\widetilde{U}_{n} \\ 
\widetilde{V}_{n}
\end{pmatrix}$ where $$\widetilde{B}_k^-=p_k\bar{B}_k^-\left(I-(1-p_k)\bar{B}_k^{(0)}\right)^{-1}= \begin{pmatrix}
1- \frac{1}{(2N+1)(s+2)} & \frac{1}{(2N+1)(s+2)} \\
 \frac{k+(N+1)(1+s)}{(2 N+1) (s+2) (N-k+1)} &1-  \frac{k+(N+1)(1+s)}{(2 N+1) (s+2) (N-k+1)} \\
\end{pmatrix}.$$
which gives the second point of the proposition.
\end{proof}

\medskip

The previous proposition gives in particular that

\begin{proposition} 
\begin{description}
\item[$(i)$] For any time $n\leq \inf(T_0^Z,T_N^Z)$,
\begin{equation} \label{eq-D}
D_n=\widetilde{U}_n-\widetilde{V}_n=\prod_{l=0}^{n-1}\lambda_{\epsilon_l}(Z_l)=\prod_{k=1}^{N-1}\lambda_+(k)^{S^+_n(k)}\lambda_-(k)^{S^-_n(k)}.\end{equation} where $S^+_n(k)$, and $S^-_n(k)$ are respectively the number of transition from $k$ to $k+1$, and from $k$ to $k-1$ in the trajectory of $(Z_l)_{l\leq n}$, and $\epsilon_l={-}$ if $Z_{l+1}=Z_l-1$ and $\epsilon_l={+}$ if $Z_{l+1}=Z_l+1$.
\item[$(ii)$] One has 
\begin{equation}\label{eq-UV}
\begin{aligned}
\widetilde{U}_n&=1-\sum_{l=0}^{n-1}\alpha_{\epsilon_l}(Z_l)D_l\\
\widetilde{V}_n&=\sum_{l=0}^{n-1}\beta_{\epsilon_l}(Z_l)D_l
\end{aligned}
\end{equation}
where \begin{equation*}
\begin{aligned}\alpha_+(k)&=\frac{-k (s+1)+N (s+2)+1}{(k+1) (2 N+1) (s+2)}=\frac{1}{(2N+1)(s+2)}+\frac{N-k}{(k+1)(2N+1)},\\ \alpha_-(k)&=\frac{1}{(2N+1)(s+2)}, \\\beta_+(k)&=\frac{s+1}{(2N+1)(s+2)}, \quad\quad \text{ and } \\\beta_-(k)&=-\frac{k+N s+N+s+1}{(2 N+1) (s+2) (k-N-1)}=\frac{s+1}{(2N+1)(s+2)}+\frac{k}{(2N+1)(N-k+1)}.
\end{aligned}
\end{equation*}
\end{description}
\end{proposition}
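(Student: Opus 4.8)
The plan is to build everything on Proposition~\ref{prop:rec-uv-changed}, which already gives the one–step recursion $(\widetilde U_{n+1},\widetilde V_{n+1})^{t}=\widetilde B_{Z_n}^{\,\epsilon_n}(\widetilde U_{n},\widetilde V_{n})^{t}$ valid for $n<\inf(T_0^Z,T_N^Z)$, where $\epsilon_n=+$ if $Z_{n+1}=Z_n+1$ and $\epsilon_n=-$ otherwise, together with the fact that $\begin{pmatrix}1&-1\end{pmatrix}$ is a left eigenvector of $\widetilde B_k^{+}$ and of $\widetilde B_k^{-}$ with eigenvalues $\lambda_+(k)$ and $\lambda_-(k)$ respectively. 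Both parts then follow from this recursion and the two initial conditions $\widetilde U_0=1$, $\widetilde V_0=0$ (immediate since $\tau_0=0$ and $U_0=1$, $V_0=0$ are deterministic).

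For part~$(i)$, I would multiply the recursion on the left by $\begin{pmatrix}1&-1\end{pmatrix}$: using the eigenvalue identity this gives the scalar relation $D_{n+1}=\lambda_{\epsilon_n}(Z_n)\,D_n$ on the event $\{n<\inf(T_0^Z,T_N^Z)\}$, which is the only range where the matrices are defined. This step is legitimate because $\widetilde U_n,\widetilde V_n$, hence $D_n$, are $\mathcal F_n^Z$–measurable, so the relation holds pathwise once the sign of the $n$–th jump is fixed. With $D_0=1$, an immediate induction yields $D_n=\prod_{l=0}^{n-1}\lambda_{\epsilon_l}(Z_l)$; the last equality in \eqref{eq-D} is just a reindexing of this product, collecting the $S_n^{+}(k)$ factors equal to $\lambda_+(k)$ and the $S_n^{-}(k)$ factors equal to $\lambda_-(k)$ for each $k\in[\![1,N-1]\!]$.

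For part~$(ii)$, the idea is to re-express each row of $\widetilde B_k^{\pm}$ through $D_n=\widetilde U_n-\widetilde V_n$. Reading off the entries of $\widetilde B_k^{+}$ from Proposition~\ref{prop:rec-uv-changed}, when $\epsilon_n=+$ one has $\widetilde U_{n+1}=\widetilde U_n-\alpha_+(Z_n)(\widetilde U_n-\widetilde V_n)$ with $\alpha_+(k)$ its top–right entry and $\widetilde V_{n+1}=\widetilde V_n+\beta_+(Z_n)(\widetilde U_n-\widetilde V_n)$ with $\beta_+(k)$ its bottom–left entry; reading off $\widetilde B_k^{-}$ gives the same identities when $\epsilon_n=-$ with $\alpha_-(k),\beta_-(k)$ the corresponding entries. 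Hence in every case $\widetilde U_{n+1}=\widetilde U_n-\alpha_{\epsilon_n}(Z_n)D_n$ and $\widetilde V_{n+1}=\widetilde V_n+\beta_{\epsilon_n}(Z_n)D_n$, and telescoping from $\widetilde U_0=1$, $\widetilde V_0=0$ produces \eqref{eq-UV}. It then only remains to verify the two alternative closed forms announced in the statement, which is an elementary partial–fraction computation: reducing $\tfrac{1}{(2N+1)(s+2)}+\tfrac{N-k}{(k+1)(2N+1)}$ to common denominator $(k+1)(2N+1)(s+2)$ gives numerator $(k+1)+(N-k)(s+2)=N(s+2)-k(s+1)+1$, matching $\alpha_+(k)$, and similarly $\tfrac{s+1}{(2N+1)(s+2)}+\tfrac{k}{(2N+1)(N-k+1)}$ has numerator $(s+1)(N-k+1)+k(s+2)=(s+1)(N+1)+k$, matching $\beta_-(k)$.

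The argument is essentially bookkeeping rather than conceptual, so the main point of care is twofold: first, everything must stay on $\{n\le\inf(T_0^Z,T_N^Z)\}$ because the matrices $\widetilde B_k^{\pm}$ are defined only for $k\in[\![1,N-1]\!]$; second, the pathwise recursion of Proposition~\ref{prop:rec-uv-changed} must be applied with the correct conditioning, the sign $\epsilon_n$ being $\mathcal F_{n+1}^Z$–measurable but not $\mathcal F_n^Z$–measurable. The only genuine calculation is checking that the rational functions $\alpha_{\pm}(k),\beta_{\pm}(k)$ in the statement coincide with the matrix entries of Proposition~\ref{prop:rec-uv-changed}.
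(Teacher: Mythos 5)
Your argument is correct and is essentially identical to the paper's proof: applying the left eigenvector $\begin{pmatrix}1&-1\end{pmatrix}$ to the one-step recursion is exactly the paper's subtraction of the two scalar equations yielding $D_{n+1}=\lambda_{\epsilon_n}(Z_n)D_n$, and the telescoping of $\widetilde U_{n+1}-\widetilde U_n=-\alpha_{\epsilon_n}(Z_n)D_n$ and $\widetilde V_{n+1}-\widetilde V_n=\beta_{\epsilon_n}(Z_n)D_n$ from the initial conditions $\widetilde U_0=1$, $\widetilde V_0=0$ is the same as in the paper. Your explicit partial-fraction check of the two closed forms for $\alpha_+$ and $\beta_-$ is a small addition the paper omits, but it changes nothing structurally.
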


\begin{proof}

From Proposition \ref{prop:rec-uv-changed} one has the following system of equations :
\begin{equation}\label{eq-matrix-UV} \left\{\begin{matrix}
\widetilde{U}_{n+1}=(1-\alpha_{\epsilon_n}(Z_n))\widetilde{U}_n+\alpha_{\epsilon_n}(Z_n)\widetilde{V}_n\\
\widetilde{V}_{n+1}=\beta_{\epsilon_n}(Z_n) \widetilde{U}_n+(1-\beta_{\epsilon_n}(Z_n))\widetilde{V}_n
\end{matrix}\right..\end{equation}

Therefore first $$\widetilde{U}_{n+1}-\widetilde{V}_{n+1}=(1-\alpha_{\epsilon_n}(Z_n)-\beta_{\epsilon_n}(Z_n))(\widetilde{U}_{n}-\widetilde{V}_{n})=\lambda_{\epsilon_n}(Z_n)(\widetilde{U}_{n}-\widetilde{V}_{n}), $$ which gives Equation \eqref{eq-D}.

Now using the first equation of \eqref{eq-matrix-UV}, one has :
$$\widetilde{U}_{n+1}-\widetilde{U}_n=-\alpha_{\epsilon_n}(Z_n)D_n,$$

which gives that
$$\widetilde{U}_n=1-\sum_{l=0}^{n-1}\alpha_{\epsilon_l}(Z_l)D_l \quad\text{as $\widetilde{U}_0=1$.}$$

Similarly using the second equation of  \eqref{eq-matrix-UV} gives that 
$$\widetilde{V}_{n+1}-\widetilde{V}_n=\beta_{\epsilon_n}(Z_n)D_n$$ and therefore 
$$\widetilde{V}_n=\sum_{l=0}^{n-1}\beta_{\epsilon_l}(Z_l)D_l\quad\text{as $\widetilde{V}_0=0$.}$$
\end{proof}

\medskip


In this article we focus on the situation in which the mutation is already present in a significant proportion of the individuals, and study the order of magnitude of the weight of ancestors once the advantageous mutation has spread in the population. This leads us to use a continuous approximation, in which the previous equations will be replaced by differential equations.
The first step of our study consists in studying the expectation of the difference of weights between advantaged and disadvantaged individuals. 
Recall that $T^Z_y=\inf\{n:Z_n=y\}$. 

\begin{proposition}\label{prop-X}
For any $0<a<b<1$,

$$\mathbb{E}(D_{T^Z_{\lfloor bN\rfloor}}\mathbf{1}_{T^Z_{\lfloor bN\rfloor}<\infty}| Z_0=\lfloor aN\rfloor)\rightarrow_{N\rightarrow\infty}\frac{\left(\frac{a}{b}\right)^{\frac{1+s}{2s}}}{\left(\frac{1-a}{1-b}\right)^{\frac{1}{2s}}}$$
when $N$ goes to infinity.
\end{proposition}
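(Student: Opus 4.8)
My plan is to characterise the quantity of interest as the value at $\lfloor aN\rfloor$ of the solution of a linear boundary-value problem, pass to a continuous approximation turning it into a first-order ODE, solve that ODE, and justify the limit. Concretely, set $F_N(k)=\mathbb{E}\big(D_{T^Z_{\lfloor bN\rfloor}}\mathbf{1}_{T^Z_{\lfloor bN\rfloor}<\infty}\,\big|\,Z_0=k\big)$ for $0\le k\le\lfloor bN\rfloor$. Since $Z$ has steps $\pm1$, is absorbed at $0$ and $N$, and $\lfloor aN\rfloor<\lfloor bN\rfloor<N$, the event $\{T^Z_{\lfloor bN\rfloor}<\infty\}$ equals $\{T^Z_{\lfloor bN\rfloor}<T^Z_0\}$. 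Conditioning on the first step of $Z$, using Proposition~\ref{prop-Z} (up-probability $p:=\tfrac{1+s}{2+s}$, down-probability $q:=\tfrac{1}{2+s}$) and the relation $D_{n+1}=\lambda_{\epsilon_n}(Z_n)D_n$ contained in \eqref{eq-D}, one gets for $1\le k\le\lfloor bN\rfloor-1$
\begin{equation*}
F_N(k)=p\,\lambda_+(k)\,F_N(k+1)+q\,\lambda_-(k)\,F_N(k-1),
\end{equation*}
with $F_N(0)=0$, $F_N(\lfloor bN\rfloor)=1$; the target is $F_N(\lfloor aN\rfloor)$. Since $0<\lambda_\pm<1$ and $p\lambda_+(k)+q\lambda_-(k)<1$, the right-hand operator is sub-stochastic, so $0\le F_N\le1$ and a comparison principle is available.

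Next I would perform the continuous approximation. For $k=\lfloor uN\rfloor$ with $u$ bounded away from $0$ and $1$, $\lambda_+(k)=1-\tfrac{1}{2uN}+O(N^{-2})$ and $\lambda_-(k)=1-\tfrac{1}{2(1-u)N}+O(N^{-2})$; substituting a smooth profile $F_N(k)\approx f(u)$, $F_N(k\pm1)\approx f(u)\pm\tfrac1N f'(u)+\tfrac1{2N^2}f''(u)$, the order-$1$ terms cancel and the order-$1/N$ terms yield
\begin{equation*}
(p-q)\,f'(u)=\Big(\tfrac{p}{2u}+\tfrac{q}{2(1-u)}\Big)f(u),\qquad u\in(0,b),
\end{equation*}
a first-order linear ODE whose only effective boundary condition is $f(b)=1$ (the one at $0$ is automatic, the solution vanishing as $u\downarrow0$). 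Integrating $\tfrac{f'}{f}=\tfrac{1}{p-q}\big(\tfrac{p}{2u}+\tfrac{q}{2(1-u)}\big)$ gives $f(u)=C\,u^{\frac{p}{2(p-q)}}(1-u)^{-\frac{q}{2(p-q)}}$; using $p-q=\tfrac{s}{2+s}$, $\tfrac{p}{p-q}=\tfrac{1+s}{s}$, $\tfrac{q}{p-q}=\tfrac1s$ and $f(b)=1$,
\begin{equation*}
f(u)=\Big(\tfrac ub\Big)^{\frac{1+s}{2s}}\Big(\tfrac{1-u}{1-b}\Big)^{-\frac{1}{2s}},\qquad\text{hence}\qquad f(a)=\frac{\big(\tfrac ab\big)^{\frac{1+s}{2s}}}{\big(\tfrac{1-a}{1-b}\big)^{\frac{1}{2s}}},
\end{equation*}
the announced limit.

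The remaining and main step is to justify rigorously that $F_N(\lfloor aN\rfloor)\to f(a)$. I would fix $\varepsilon=a/2$ and work on $\{\lfloor\varepsilon N\rfloor\le k\le\lfloor bN\rfloor\}$, where $f$ and its derivatives are bounded and the expansion above is valid, so that $f$ solves the recursion with a pointwise defect $r_N(k)$ of size $O(N^{-2})$. Setting $G_N=F_N-f$ (so $G_N(k)=p\lambda_+(k)G_N(k+1)+q\lambda_-(k)G_N(k-1)-r_N(k)$ on the interior) and iterating, using $0<\lambda_\pm<1$, one gets, with $\tau=T^Z_{\lfloor\varepsilon N\rfloor}\wedge T^Z_{\lfloor bN\rfloor}$,
\begin{equation*}
|G_N(k)|\le\mathbb{E}\big(|G_N(Z_\tau)|\,\big|\,Z_0=k\big)+\mathbb{E}\big(\tau\,\big|\,Z_0=k\big)\,\sup_j|r_N(j)|.
\end{equation*}
The second term is $O(N)\cdot O(N^{-2})=O(N^{-1})$ since $Z$ has fixed positive drift and hence $\mathbb{E}(\tau\mid Z_0=k)=O(N)$; in the first term $G_N(Z_\tau)$ vanishes on $\{Z_\tau=\lfloor bN\rfloor\}$ and is $O(1)$ (by $0\le F_N\le1$) on $\{Z_\tau=\lfloor\varepsilon N\rfloor\}$, an event of probability at most $C(q/p)^{(a-\varepsilon)N}$ by the gambler's ruin formula for $Z$ (with $q/p=\tfrac{1}{1+s}<1$), hence exponentially small. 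Therefore $F_N(\lfloor aN\rfloor)=f(a)+O(N^{-1})\to f(a)$. The same gambler's ruin estimate gives $\mathbb{P}(T^Z_{\lfloor bN\rfloor}<\infty\mid Z_0=\lfloor aN\rfloor)\to1$, so the statement is non-degenerate. The delicate points — which I expect to be the real obstacle — are the control of the boundary layer near $0$, handled here through the exponentially small probability of $Z$ drifting down to $\lfloor\varepsilon N\rfloor$ (this also allows sending $\varepsilon\downarrow0$ afterwards if a uniform statement is wanted), and the accumulation of the $O(N^{-2})$ local errors over the $O(N)$ steps of $Z$.
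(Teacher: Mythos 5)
Your proposal is correct and follows essentially the same route as the paper: same explicit candidate $f(u)=C\,u^{\frac{1+s}{2s}}(1-u)^{-\frac{1}{2s}}$, same one-step recursion (the paper's generator $\mathcal{L}$), same Taylor expansion producing an $O(N^{-2})$ pointwise defect, and the same error control — a cutoff at $\lfloor aN/2\rfloor$ whose contribution is exponentially small by the gambler's-ruin bound for the $\frac{1+s}{2+s}$-biased walk, plus an $O(N)\times O(N^{-2})$ accumulation bound (which you phrase via the expected exit time $\mathbb{E}(\tau)$ and the paper phrases via the Green's function $\sum_n Q^n$ and occupation times; these are the same estimate).
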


Although we are particularly interested in the case where $b=1$, we start by considering the case where $b>1$, in order to simplify the approximation. The limit where $b$ goes to $1$ will be tackled in the final step of the proof of  Theorem \ref{thm}.



\begin{proof}
Define for any $k\in[\![1,\lfloor bN\rfloor]\!]$, $$\phi_D(k)=\mathbb{E}(D_{T^Z_{\lfloor bN\rfloor}}\mathbf{1}_{T^Z_{\lfloor bN\rfloor}<\infty}| Z_0=k) \quad\text{ and }\quad
\psi_D(k)=\frac{\left(\frac{k}{N}\right)^{\frac{1+s}{2s}}}{\left(1-\frac{k}{N}\right)^{\frac{1}{2s}}}\times \frac{(1-b)^{\frac{1}{2s}}}{b^{\frac{1+s}{2s}}}.$$ Our aim is to prove that the functions $\phi_D$ and $\psi_D$ are close to each other on $[\lfloor aN\rfloor,\lfloor bN\rfloor]$ for large $N$.

Let us define the infinitesimal generator $\mathcal{L}$ on the set of real valued functions $g$ on $[\![0,N]\!]$ vanishing in $0$ and $N $ by $$\mathcal{L}g(k)=\frac{1+s}{2+s}\lambda_+(k)g(k+1)+\frac{1}{2+s}\lambda_-(k)g(k-1)-g(k)$$ for any $k\in[\![1,N-1]\!]$. Recall that $\lambda_+(k)=1-\frac{N+1}{(k+1)(2N+1)}$ and $\lambda_-(k)=1-\frac{N+1}{(N-k+1)(2N+1)}$. Therefore  \begin{align*}\mathcal{L}g(k)&=\frac{1+s}{2+s}\left(g(k+1)-g(k)\right)+\frac{1}{2+s}\left(g(k-1)-g(k)\right)\\&-\frac{1+s}{2+s}\frac{N+1}{(k+1)(2N+1)}g(k+1)-\frac{1}{2+s}\frac{N+1}{(N-k+1)(2N+1)}g(k-1).\end{align*}

By definition, the function $\phi_D$ is such that for any $k\in[\![1,\lfloor bN\rfloor-1]\!]$,
$$\mathcal{L}\phi_D(k)=0,$$ 
and $\phi_D(\lfloor bN\rfloor)=1$. 

Now the function $\psi_D$ is such that $\psi_D(\lfloor bN\rfloor)=1$. What is more,  for any $1\leq k \leq \lfloor bN\rfloor$, $$\psi_D(k)=f_D\left(\frac{k}{N}\right)$$ where $f_D(x)=C_b\frac{x^{\frac{1+s}{2s}}}{(1-x)^{\frac{1}{2s}}}$, where $C_b=\frac{(1-b)^{\frac{1}{2s}}}{(b^{\frac{1+s}{2s}}}$ for all $x\in[0,b]$. One has for all $x\in[0,b]$,
 \begin{align*}f'_D(x)&=f_D(x)\left[\frac{1+s}{2s}\frac{1}{x}+\frac{1}{2s}\frac{1}{1-x}\right]\quad\text{and}\\f''_D(x)&=f_D(x)\left[\left(\frac{1+s}{2s}\frac{1}{x}+\frac{1}{2s}\frac{1}{1-x}\right)^2-\frac{1+s}{2s}\frac{1}{x^2}+\frac{1}{2s}\frac{1}{(1-x)^2}\right].\end{align*}
 
 Hence $$\psi_D(k\pm1)=\psi_D(k)\pm\psi_D(k)\left[\frac{1+s}{2s}\frac{1}{k}+\frac{1}{2s}\frac{1}{N-k}\right]+R\left(\frac{k}{N}\right)$$ where the function $R$ is such that there exists a positive constant $C$ such that $|R(x)|\leq \frac{C}{N^2}$ for all $x\in[a,b]$.
 
 Therefore  for any $1\leq k \leq \lfloor bN\rfloor-1$  \begin{align*}\mathcal{L}\psi_D(k)&=\frac{1}{2+s}\frac{1}{2N}\psi_D(k)\left[\frac{1+s}{\frac{k}{N}}+\frac{1}{1-\frac{k}{N}}\right]-\frac{1}{2+s}\psi_D(k)\left(\frac{1+s}{2(k+1)}+\frac{1}{2(N-k+1)}\right)+\widetilde{R}\left(\frac{k}{N}\right)\\&=\frac{1}{2+s}\psi_D(k)\left[\frac{1+s}{2}\left(\frac{1}{k}-\frac{1}{k+1}\right)+\frac{1}{2}\left(\frac{1}{N-k}-\frac{1}{N-k+1}\right)\right]+\widetilde{R}\left(\frac{k}{N}\right)\end{align*}

where the function $\widetilde{R}$  is such that there exists a positive constant $\widetilde{C}$ such that $|\widetilde{R}(x)|\leq \frac{C}{N^2}$ for all $x\in[a,b]$.
Hence there exists a positive constant $C'$ such that for all $k\in[\lfloor aN\rfloor,\lfloor bN\rfloor]$,
$$|\mathcal{L}\psi(k)|\leq \frac{C}{N^2}.$$

Let us  now define the function $\bar{\psi}$ by $\bar{\psi}_D(k)=\psi_D(k)$ for all $k\in[\lfloor aN/2\rfloor,\lfloor bN\rfloor]$ and $\bar{\psi}_D(k)=\phi_D(k)$ for all $k\in[0,\lfloor aN/2\rfloor[$. Our aim from now is to prove that the functions $\phi_D$ and $\bar{\psi}_D$ are close to each other on $[\lfloor aN\rfloor, \lfloor bN\rfloor]$.

Note that since $\bar{\psi_D}-\phi_D$ vanishes in $0$ and $N$,  $$-(\bar{\psi}_D-\phi_D)=((-\mathcal{L})^{-1})\mathcal{L})(\bar{\psi}_D-\phi_D),$$

where $(-\mathcal{L})^{-1}=\sum_{n\geq 0} (I+\mathcal{L})^n=\sum_{n\geq 0} Q^n$ where $Q=(Q_{ij})_{1\leq i,j\leq \lfloor bN\rfloor-1}$ is defined by \begin{align*}Q_{k,k+1}&=\frac{1+s}{2+s}\lambda_+(k), \quad\quad\text{for all $k\in[1,\lfloor bN-2\rfloor]$}\\Q_{k,k-1}&=\frac{1}{2+s}\lambda_-(k) \quad\quad\text{for all $k\in[2,\lfloor bN\rfloor-1]$}\\
Q_{k,l}&=0 \quad \text{elsewhere.}\end{align*}

We know that \begin{equation}\label{eq-Ldiff}\left\{\begin{aligned}&\mathcal{L}(\bar{\psi}_D-\phi_D)(k)=O\left(\frac{1}{N^2}\right) \quad\text{ uniformly for $k\in[\![1,N]\!]\setminus\{\lfloor a N/2\rfloor-1,\lfloor a N/2\rfloor,\lfloor a N/2\rfloor +1\}$, and}\\&\mathcal{L}(\bar{\psi}_D-\phi_D)(k)\leq 1 \text{  for $k\in\{\lfloor a N/2\rfloor-1,\lfloor a N/2\rfloor,\lfloor aN/2\rfloor +1\}$.}\end{aligned}\right.\end{equation}

Note now that since $\lambda_+(k)$ and $\lambda_-(k)$ are in $[0,1]$,  $$\big(\sum_{n\geq 0} Q^n\big)(x,y)\leq\mathbb{E}\big(\sum_{n\geq0} \mathbf{1}_{Z_n=y}|Z_0=x\big),$$ and from Markov property,

$$\mathbb{E}\big(\sum_{n\geq0} \mathbf{1}_{Z_n=y}|Z_0=x\big)=\mathbb{P}(T^Z_y<\infty|Z_0=x)\mathbb{E}(\sum_{n\geq0}  \mathbf{1}_{Z_n=y}|Z_0=y)<C(s) \mathbb{P}_x(T^Z_y<\infty)$$ as $s>0$. 

Besides, recall from Proposition \ref{prop-Z} that $\mathbb{P}(T^Z_{ a N/2}<\infty|Z_0= a N)$ decreases exponentially with $N$, when $ a$ is fixed.

Therefore \begin{align*}
\left|\bar{\psi}_D-\phi_D\right|(k)&=\big|\big(\sum_{n\geq 0} Q^n\big)\big(\mathcal{L}(\bar{\psi}_D-\phi_D)\big)\big|(k)\\&\leq\big(\sum_{n\geq 0} Q^n\big)\big|\mathcal{L}(\bar{\psi}_D-\phi_D)\big|(k)\\&\leq\sum_{j\in[\![0,N]\!]}\sum_{n\geq 0} Q^n_{kj}\big|\big(\mathcal{L}(\bar{\psi}_D-\phi_D)\big)(j)\big|\\&=\sum_{j\in\{\lfloor  a N/2\rfloor-1,\lfloor a N/2\rfloor,\lfloor a N/2\rfloor +1\}}\sum_{n\geq 0} Q^n_{kj}\big|\big(\mathcal{L}(\bar{\psi}_D-\phi_D)\big)(j)\big|\\&+\quad\sum_{j\notin\{\lfloor  a N/2\rfloor-1,\lfloor a N/2\rfloor,\lfloor a N/2\rfloor +1\}}\sum_{n\geq 0} Q^n_{kj}\big|\big(\mathcal{L}(\bar{\psi}_D-\phi_D)\big)(j)\big|.\end{align*}

The first quantity has an exponential bound from Proposition \ref{prop-Z}. The second quantity is bounded by $C/N$, from Equation \ref{eq-Ldiff} and since $\sum_j \sum_n Q^n_{kj}= \mathbb{E}(T_{0,N}^Z|Z_0=k)\leq C(s) N$. Therefore there exists a constant $C$ such that $\left|\bar{\psi}_D-\phi_D\right|(k)\leq C/N$ for all $k \in[ a N, bN]$.
\end{proof}

Pushing the same approach further and using the previous proposition, we get that

\begin{proposition}\label{prop-main}For any $0<a<b<1$,
\begin{align*}\mathbb{E}(\widetilde{U}_{T_{0,b N}}\mathbf{1}_{T^Z_{\lfloor bN\rfloor}<\infty}|Z_0=\lfloor aN\rfloor)&\rightarrow_{N\rightarrow\infty} \frac{\left(\frac{a}{b}\right)^{\frac{1+s}{2s}}}{\left(\frac{1-a}{1-b}\right)^{\frac{1}{2s}}}+\left(\int_{a}^{b}\frac{(1-u)^{\frac{1}{2s}}}{u^{\frac{1+s}{2s}}}\left[\frac{1}{2}+\frac{1}{2s}\frac{1}{1-u}\right]du\right)\times \frac{a^{\frac{1+s}{2s}}}{(1-a)^{\frac{1}{2s}}}\end{align*}

\begin{align*}\mathbb{E}(\widetilde{V}_{T_{0,b N}}\mathbf{1}_{T^Z_{\lfloor bN\rfloor}<\infty}|Z_0=\lfloor aN\rfloor)\rightarrow_{N\rightarrow\infty}\left(\int_{a}^{b}\frac{(1-u)^{\frac{1}{2s}}}{u^{\frac{1+s}{2s}}}\left[\frac{1}{2}+\frac{1}{2s}\frac{1}{1-u}\right]du\right)\times \frac{a^{\frac{1+s}{2s}}}{(1-a)^{\frac{1}{2s}}}\end{align*}
\end{proposition}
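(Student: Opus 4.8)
We first reduce the two limits to a single one. Write $m=\lfloor bN\rfloor$. Since $Z$ is a nearest neighbour walk absorbed at $0$ and $N$ and $\lfloor aN\rfloor<m<N$, the walk cannot reach $N$ without visiting $m$, so on $\{T^Z_{m}<\infty\}$ one has $T_{0,bN}=T^Z_{m}$; moreover, by the gambler's ruin formula for the biased walk of Proposition \ref{prop-Z}, $h_N(k):=\mathbb{P}(T^Z_{m}<\infty\mid Z_0=k)=\frac{1-r^{k}}{1-r^{m}}$ with $r=\tfrac1{1+s}$, so in particular $h_N(\lfloor aN\rfloor)\to1$. Because $D_n=\widetilde U_n-\widetilde V_n$ by \eqref{eq-D}, it suffices, in view of Proposition \ref{prop-X}, to establish the second displayed limit, i.e. to prove that $J_N(\lfloor aN\rfloor):=\mathbb{E}\big(\widetilde V_{T^Z_m}\mathbf{1}_{T^Z_m<\infty}\mid Z_0=\lfloor aN\rfloor\big)$ converges to $\frac{a^{(1+s)/2s}}{(1-a)^{1/2s}}\int_a^b\frac{(1-u)^{1/2s}}{u^{(1+s)/2s}}\big[\tfrac12+\tfrac1{2s}\tfrac1{1-u}\big]\,du$; adding the limit of Proposition \ref{prop-X} then yields the first display.

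Next I would set up a boundary value problem for $J_N(k):=\mathbb{E}\big(\widetilde V_{T^Z_m}\mathbf{1}_{T^Z_m<\infty}\mid Z_0=k\big)$. From the representation $\widetilde V_n=\sum_{l=0}^{n-1}\beta_{\epsilon_l}(Z_l)D_l$ of \eqref{eq-UV}, the facts that $D_0=1$ and $D_l=\lambda_{\epsilon_0}(Z_0)\,(D_{l-1}\circ\theta)$ for $l\ge1$ (with $\theta$ the shift), hence $\widetilde V_{T^Z_m}=\beta_{\epsilon_0}(Z_0)+\lambda_{\epsilon_0}(Z_0)\big(\widetilde V_{T^Z_m}\circ\theta\big)$, and a one-step Markov decomposition, one obtains for $1\le k\le m-1$
\[J_N(k)=\tfrac{1+s}{2+s}\big[\beta_+(k)h_N(k+1)+\lambda_+(k)J_N(k+1)\big]+\tfrac1{2+s}\big[\beta_-(k)h_N(k-1)+\lambda_-(k)J_N(k-1)\big],\]
with $J_N(0)=J_N(m)=0$. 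Equivalently, with the operator $\mathcal{L}$ and the matrix $Q$ of the proof of Proposition \ref{prop-X}, $\mathcal{L}J_N=-\rho_N$ on $[\![1,m-1]\!]$, where $\rho_N(k)=\tfrac{1+s}{2+s}\beta_+(k)h_N(k+1)+\tfrac1{2+s}\beta_-(k)h_N(k-1)$. Since $h_N(k\pm1)=1+O(r^{aN})$ uniformly for $k\in[\![\lfloor aN\rfloor,m]\!]$, and using the explicit $\beta_\pm$, one gets $\rho_N(k)=\tfrac1{2N(s+2)}\big[(1+s)+\tfrac{x}{1-x}\big]+O(N^{-2})$ uniformly on $[aN,bN]$, with $x=k/N$.

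Then I would exhibit the continuous candidate. Expanding $\mathcal{L}g(k/N)$ to first order in $1/N$ — exactly as for $\psi_D$, using $\lambda_+(k)=1-\tfrac1{2Nx}+O(N^{-2})$, $\lambda_-(k)=1-\tfrac1{2N(1-x)}+O(N^{-2})$ — the relation $\mathcal{L}\psi=-\rho_N$ becomes, after multiplication by $N(s+2)$, the linear ODE $s\,g'(x)-\tfrac{g(x)}2\big(\tfrac{1+s}{x}+\tfrac1{1-x}\big)=-\tfrac{s}{2}-\tfrac1{2(1-x)}$, whose homogeneous solutions are multiples of $x^{(1+s)/2s}(1-x)^{-1/2s}$. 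Integrating with the integrating factor $\mu(x)=x^{-(1+s)/2s}(1-x)^{1/2s}$ and imposing $g(b)=0$ gives
\[g(x)=\frac{x^{(1+s)/2s}}{(1-x)^{1/2s}}\int_x^b\frac{(1-u)^{1/2s}}{u^{(1+s)/2s}}\Big[\tfrac12+\tfrac1{2s}\tfrac1{1-u}\Big]\,du ;\]
a short computation tracking the integrability of $u\mapsto u^{-(1+s)/2s}$ at $0$ (whatever the sign of $s-1$) shows that also $g(x)\to0$ as $x\downarrow0$, so the candidate meets both homogeneous boundary conditions, just as $\psi_D$ did. Setting $\psi_J(k)=g(k/N)$, a Taylor expansion as in the proof of Proposition \ref{prop-X} gives $\mathcal{L}\psi_J(k)=-\rho_N(k)+O(N^{-2})$ uniformly on $k\in[\![\lfloor aN\rfloor,m-1]\!]$.

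It then remains to run the comparison argument verbatim from the proof of Proposition \ref{prop-X}: set $\bar\psi_J(k)=\psi_J(k)$ for $k\ge\lfloor aN/2\rfloor$ and $\bar\psi_J(k)=J_N(k)$ below, so that $\bar\psi_J-J_N$ vanishes at $0$ and $m$; write $-(\bar\psi_J-J_N)=\big(\sum_{n\ge0}Q^n\big)\mathcal{L}(\bar\psi_J-J_N)$; bound $\mathcal{L}(\bar\psi_J-J_N)$ by $O(N^{-2})$ off the three sites around $\lfloor aN/2\rfloor$ and by a constant on them; and combine $\sum_{j,n}Q^n_{kj}\le\mathbb{E}(T^Z_{0,m}\mid Z_0=k)\le C(s)N$ with the exponentially small probability $\mathbb{P}(T^Z_{\lfloor aN/2\rfloor}<\infty\mid Z_0=k)$ for $k\ge aN$ (Proposition \ref{prop-Z}) to conclude $|\bar\psi_J-J_N|(k)\le C/N$ on $[aN,bN]$. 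In particular $J_N(\lfloor aN\rfloor)=\psi_J(\lfloor aN\rfloor)+O(1/N)\to g(a)$, which is the asserted limit for $\widetilde V$; adding the limit of Proposition \ref{prop-X} gives the one for $\widetilde U$. The only genuinely new point — everything else being a copy of the argument already carried out — is pinning down the source term $\rho_N$ and checking that the resulting first order ODE, although it formally carries the two boundary conditions $g(0)=g(b)=0$, is consistent (the particular solution selected by $g(b)=0$ automatically vanishing at $0$); this is where I expect the only real work to lie.
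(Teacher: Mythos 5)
Your proposal is correct and follows essentially the same route as the paper: identify the discrete Poisson equation $\mathcal{L}\phi_{\widetilde V}=-\rho_N$ with source given by the $\beta_\pm$, verify that the explicit integral expression solves the limiting first-order ODE up to $O(N^{-2})$, and conclude by the same gluing-at-$\lfloor aN/2\rfloor$ and Green's-function comparison as in Proposition \ref{prop-X}, with the $\widetilde U$ limit obtained by adding Proposition \ref{prop-X}. The only differences are cosmetic (you carry the indicator and the harmonic factor $h_N$ explicitly, and you derive rather than merely verify the candidate), and your worry about $g(0)=0$ is moot since the gluing below $\lfloor aN/2\rfloor$ makes the behaviour of the candidate near $0$ irrelevant.
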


\begin{proof} For any $k\in[a N, b N]$ let us denote $\phi_{\widetilde{V}}(k)=\mathbb{E}(\widetilde{V}_{T_{0,b N}}|Z_0=k)$.  The function $\phi_{\widetilde{V}}$ now satisfies for any $k\in]\!]1, bN[\![$,
$$\mathcal{L}\phi_{\widetilde{V}}(k)=-\frac{1+s}{2+s}\beta_+(k)-\frac{1}{2+s}\beta_-(k).$$
Moreover, when $N$ is large, $\beta_+(k)\sim\frac{1+s}{2N(2+s)}$ and $\beta_-(k)\sim\frac{1+s}{2N(2+s)}+\frac{k}{2N(N-k)}$. Therefore

\begin{align*}\mathcal{L}\phi_{V}(k)=-\frac{1+s}{2N(2+s)}-\frac{1}{2+s}\frac{k}{2N(N-k)}+O(1/N^2)=-\frac{1}{2N(2+s)}\frac{N+s(N-k)}{N-k}+O(1/N^2)\end{align*}
for all $k\in[\![1,bN]\!]$.

Now let $$\psi_{\widetilde{V}}(k)=\left(\int_{k/N}^{b}\frac{(1-u)^{\frac{1}{2s}}}{u^{\frac{1+s}{2s}}}\left[\frac{1}{2}+\frac{1}{2s}\frac{1}{1-u}\right]du\right)\times \frac{\left(\frac{k}{N}\right)^{\frac{1+s}{2s}}}{(1-\frac{k}{N})^{\frac{1}{2s}}}.$$

Our aim from here is to prove that the functions $\phi_{V}$ and $\psi_{V}$ are close to each other.

Setting $\psi_{\widetilde{V}}(k)=f_{\widetilde{V}}(k/N)$, we have

$$f_{V}(t)=\left(\int_{t}^{b}\frac{(1-u)^{\frac{1}{2s}}}{u^{\frac{1+s}{2s}}}\left[\frac{1}{2}+\frac{1}{2s}\frac{1}{1-u}\right]du\right)\times \frac{t^{\frac{1+s}{2s}}}{(1-t)^{\frac{1}{2s}}}$$

hence

$$f_{\widetilde{V}}'(t)=f_{\widetilde{V}}(t)\left[\frac{1+s}{2s}\frac{1}{t}+\frac{1}{2s}\frac{1}{2(1-t)}\right]-\frac{1}{2}-\frac{1}{2s}\frac{1}{1-t}$$
and 
$$f_{\widetilde{V}}''(t)=f_{\widetilde{V}}(t)\left[\frac{1+s}{2s}\frac{1}{t}+\frac{1}{2s}\frac{1}{1-t}\right]-\frac{1+s}{2(2+s)}-\frac{1}{2+s}\frac{t}{2(1-t)}.$$

Therefore $$\psi_{\widetilde{V}}(k\pm1)=\psi_{\widetilde{V}}(k)\pm\frac{1}{N}\left[\psi_{\widetilde{V}}(k)\left[\frac{1+s}{2+s}\frac{1}{2\frac{k}{N}}+\frac{1}{2+s}\frac{1}{2(1-\frac{k}{N})}\right]-\frac{1+s}{2(2+s)}-\frac{1}{2+s}\frac{\frac{k}{N}}{2(1-\frac{k}{N})}\right]+R(k/N)$$ where the function $R$ is such that there exists a positive constant $C$ such that $|R(x)|\leq \frac{C}{N^2}$. Hence, since $\psi_{\widetilde{V}}$ is bounded, 

\begin{align*}\mathcal{L}\psi_{\widetilde{V}}(k)=-\frac{1}{2+s}\left[\frac{s}{2N}+\frac{1}{2(N-k)}\right]+O\left(\frac{1}{N^2}\right)=-\frac{1}{2+s}\frac{s(N-k)+N}{2N(N-k)}+O\left(\frac{1}{N^2}\right).\end{align*} for all $k\in[\![1,bN]\!]$.

Therefore $$\mathcal{L}(\psi_{\widetilde{V}}-\phi_{\widetilde{V}})(k)=O\left(\frac{1}{N^2}\right)$$ for all $k\in[\![1,bN]\!]$.

Now as in the proof of Proposition \ref{prop-X}, one can introduce a function $\bar{\psi}_{\tilde{V}}$ which coincides with $\psi_{\tilde{V}}$ above $\lfloor aN/2\rfloor$ and with $\phi_{\tilde{V}}$ below. Next, reasoning exactly as in the proof of Proposition \ref{prop-X}, the fact that $\mathbb{P}(T^Z_{aN.2}|Z_0=aN)$ decreases exponentially with $N$ when $a$ is fixed gives that there exists a constant $C$ such that $\left|\bar{\psi}_V-\phi_V\right|(k)\leq C/N$ for all $k \in[\![aN,bN]\!]$ which gives the result for $\tilde{V}$. The expression for $\tilde{U}$ then follows by Proposition \ref{prop-X}.

\end{proof}
We can now complete the proof of our theorem :
\begin{theorem}\label{thm} 
\begin{align*}\mathbb{E}(\Xi^A_{T^Y_N}\mathbf{1}_{T^Y_N<\infty}|Y_0=\lfloor aN\rfloor)&\rightarrow_{N\rightarrow\infty} \frac{a^{\frac{1+s}{2s}}}{(1-a)^{\frac{1}{2s}}}\left(\int_{a}^{1}\frac{(1-u)^{\frac{1}{2s}}}{u^{\frac{1+s}{2s}}}\left[\frac{1}{2}+\frac{1}{2s}\frac{1}{1-u}\right]du\right). \end{align*}
\end{theorem}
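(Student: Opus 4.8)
The plan is to reduce the theorem, by optional stopping, to the quantity $\mathbb{E}\big(\widetilde{U}_{T^Z_{0,N}}\mathbf{1}_{T^Z_N<\infty}\,\big|\,Z_0=\lfloor aN\rfloor\big)$, which Proposition~\ref{prop-main} already controls for every $b\in(0,1)$ with $\lfloor bN\rfloor$ in place of $N$, and then to let $b\uparrow1$. The first step is the identity
$$\mathbb{E}\big(\Xi^A_{T^Y_N}\mathbf{1}_{T^Y_N<\infty}\,\big|\,Y_0=\lfloor aN\rfloor\big)=\mathbb{E}\big(\widetilde{U}_{T^Z_{0,N}}\mathbf{1}_{T^Z_N<\infty}\,\big|\,Z_0=\lfloor aN\rfloor\big).$$
To get it, decompose on $\{T^Y_N=m\}\in\mathcal{F}^Y_m$ and use $\mathbb{E}(\Xi^A_m\,|\,\mathcal{F}^Y_m)=U_m$, which gives $\mathbb{E}(\Xi^A_{T^Y_N}\mathbf{1}_{T^Y_N<\infty})=\mathbb{E}(U_{T^Y_N}\mathbf{1}_{T^Y_N<\infty})$; then use $\tau_{T^Z_N}=T^Y_N$, so that $\widetilde{U}_{T^Z_N}=\mathbb{E}(U_{T^Y_N}\,|\,\mathcal{F}^Z_{T^Z_N})$, the fact that $(Z_n)$ is almost surely absorbed so $T^Z_{0,N}<\infty$ almost surely with $\{T^Z_N<\infty\}=\{T^Y_N<\infty\}=\{Z_{T^Z_{0,N}}=N\}\in\mathcal{F}^Z_{T^Z_{0,N}}$, and that on this event $T^Z_{0,N}=T^Z_N$; one conditioning step then finishes it (recall $Z_0=Y_0$).

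Next, fix $b\in(1/2,1)$, so that $\lfloor aN\rfloor<\lfloor bN\rfloor<N$ for $N$ large. On $\{T^Z_N<\infty\}$ the nearest-neighbour walk $(Z_n)$ visits $\lfloor bN\rfloor$ before $0$, hence $T^Z_{0,bN}=T^Z_{\lfloor bN\rfloor}\le T^Z_N=T^Z_{0,N}$ there, and \eqref{eq-UV} gives
$$\widetilde{U}_{T^Z_N}-\widetilde{U}_{T^Z_{\lfloor bN\rfloor}}=-\sum_{l=T^Z_{\lfloor bN\rfloor}}^{T^Z_N-1}\alpha_{\epsilon_l}(Z_l)\,D_l.$$
I would then bound this tail using three facts: $0\le\alpha_{\pm}(k)\le C(s)/N$ for $k\in[\lfloor bN\rfloor,N]$ (read off the explicit formulas, uniformly for $b\ge1/2$); the sequence $(D_l)$ is non-increasing because each $\lambda_\pm\in(0,1)$ in \eqref{eq-D}, so $D_l\le D_{T^Z_{\lfloor bN\rfloor}}$ in that range; and a classical gambler's-ruin estimate for the upward-biased walk of Proposition~\ref{prop-Z} started at $\lfloor bN\rfloor$, giving $\mathbb{E}\big((T^Z_N-T^Z_{\lfloor bN\rfloor})\mathbf{1}_{T^Z_N<\infty}\,\big|\,Z_{T^Z_{\lfloor bN\rfloor}}=\lfloor bN\rfloor\big)\le C(s)(1-b)N$. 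Combining them through the strong Markov property at $T^Z_{\lfloor bN\rfloor}$, together with $\mathbb{E}\big(D_{T^Z_{\lfloor bN\rfloor}}\mathbf{1}_{T^Z_{\lfloor bN\rfloor}<\infty}\,\big|\,Z_0=\lfloor aN\rfloor\big)\le M(a,s)$ for $N$ large (Proposition~\ref{prop-X}), yields
$$\Big|\mathbb{E}\big((\widetilde{U}_{T^Z_N}-\widetilde{U}_{T^Z_{\lfloor bN\rfloor}})\mathbf{1}_{T^Z_N<\infty}\,\big|\,Z_0=\lfloor aN\rfloor\big)\Big|\le C(a,s)(1-b)$$
uniformly in (large) $N$. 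One must also replace $\mathbf{1}_{T^Z_{\lfloor bN\rfloor}<\infty}$ by $\mathbf{1}_{T^Z_N<\infty}$ in front of $\widetilde{U}_{T^Z_{\lfloor bN\rfloor}}$, but since $0\le\widetilde{U}\le1$ and $\mathbb{P}\big(T^Z_{\lfloor bN\rfloor}<\infty,\,T^Z_N=\infty\,\big|\,Z_0=\lfloor aN\rfloor\big)\le(1+s)^{-\lfloor bN\rfloor}$ for the biased walk, that correction is exponentially small in $N$. Denoting by $G(b)$ the limit provided by Proposition~\ref{prop-main} for $\widetilde{U}$, we conclude that $\limsup_{N\to\infty}\big|\mathbb{E}(\widetilde{U}_{T^Z_{0,N}}\mathbf{1}_{T^Z_N<\infty}\,|\,Z_0=\lfloor aN\rfloor)-G(b)\big|\le C(a,s)(1-b)$ for every $b\in(1/2,1)$.

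Finally I would let $b\uparrow1$. The first summand of $G(b)$ equals $(a/b)^{\frac{1+s}{2s}}(1-b)^{\frac1{2s}}(1-a)^{-\frac1{2s}}\to0$, and $\int_a^b\to\int_a^1$ because near $u=1$ the integrand is equivalent to $\tfrac1{2s}(1-u)^{\frac1{2s}-1}$, which is integrable. Hence $G(b)$ tends to $\frac{a^{\frac{1+s}{2s}}}{(1-a)^{\frac1{2s}}}\int_a^1\frac{(1-u)^{\frac1{2s}}}{u^{\frac{1+s}{2s}}}\big[\tfrac12+\tfrac1{2s}\tfrac1{1-u}\big]du$; since the error bound $C(a,s)(1-b)$ is uniform in $N$, the limit of $\mathbb{E}(\widetilde{U}_{T^Z_{0,N}}\mathbf{1}_{T^Z_N<\infty}\,|\,Z_0=\lfloor aN\rfloor)$ exists and equals this value, which together with the identity of the first paragraph is the claimed result. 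I expect the main obstacle to be precisely the uniform-in-$N$ estimate of the tail term $\widetilde{U}_{T^Z_N}-\widetilde{U}_{T^Z_{\lfloor bN\rfloor}}$: none of the ingredients — the $O(1/N)$ size of $\alpha_\pm$, the monotone decay of $D_l$, or the $O((1-b)N)$ control of the residual absorption time — is enough alone, and they must be balanced against the sharp bound on $\mathbb{E}(D_{T^Z_{\lfloor bN\rfloor}}\mathbf{1}_{\,\cdot\,})$ coming from Proposition~\ref{prop-X}.
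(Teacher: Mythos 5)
Your proposal is correct in substance and reaches the theorem by the same overall reduction as the paper (pass to $\widetilde U$ at the jump times, apply Proposition \ref{prop-main} at an intermediate level $b<1$, then let $b\uparrow 1$), but the mechanism you use for the key step --- controlling the gap between $T^Z_{\lfloor bN\rfloor}$ and $T^Z_N$ --- is genuinely different. The paper exploits monotonicity: by \eqref{eq-UV} the sequence $(\widetilde U_n)$ is nonincreasing and $(\widetilde V_n)$ is nondecreasing, and $\widetilde V_n\le\widetilde U_n$ since $D_n\ge0$, so on $\{T^Z_N<\infty\}$ one has the sandwich $\widetilde V_{T^Z_{\lfloor(1-\epsilon)N\rfloor}}\le\widetilde U_{T^Z_N}\le\widetilde U_{T^Z_{\lfloor(1-\epsilon)N\rfloor}}$; both bounds are then evaluated by Proposition \ref{prop-main}, and they pinch together as $\epsilon\to0$ precisely because their difference is $\mathbb{E}(D_{T^Z_{\lfloor(1-\epsilon)N\rfloor}})$, the first summand of the $\widetilde U$ limit, which is $O(\epsilon^{1/2s})$. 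This is softer than your route: it needs no hitting-time estimate and no bound on the increments, but it does consume both halves of Proposition \ref{prop-main} (the $\widetilde V$ asymptotics are used only for this). Your route instead bounds the tail $\widetilde U_{T^Z_{\lfloor bN\rfloor}}-\widetilde U_{T^Z_N}=\sum\alpha_{\epsilon_l}(Z_l)D_l$ quantitatively via $\alpha_\pm=O(1/N)$, $D_l\le D_{T^Z_{\lfloor bN\rfloor}}$, and the $O((1-b)N)$ expected residual absorption time; it uses only the $\widetilde U$ half of Proposition \ref{prop-main} and yields the slightly better rate $O(1-b)$. One point to patch in your version: the bound $\alpha_\pm(k)\le C(s)/N$ is only valid for $k$ of order $N$ (e.g.\ $k\ge N/2$; note $\alpha_+(k)$ is of order $1$ for small $k$), and after $T^Z_{\lfloor bN\rfloor}$ the walk is not confined to $[\lfloor bN\rfloor,N]$. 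You should split on the event that the walk returns below $N/2$ after hitting $\lfloor bN\rfloor$ --- which for the upward-biased walk of Proposition \ref{prop-Z} has exponentially small probability --- and on that exceptional event bound the whole tail sum crudely by $\widetilde U_{T^Z_{\lfloor bN\rfloor}}\le1$. With that adjustment your argument is complete.
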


\begin{proof}
First, by definition of $\tilde U$, $\mathbb{E}(\Xi^A_{T^Y_N}\mathbf{1}_{T^Y_N<\infty}|Y_0=\lfloor aN\rfloor)=\mathbb{E}(\tilde U_{T^Z_N}\mathbf{1}_{T^Z_N<\infty}|Y_0=\lfloor aN\rfloor)$.\\
Note that from Equation \eqref{eq-UV}, the sequences $(\widetilde{U}_n)_{n\in\mathbb{Z}_+}$ and $(\widetilde{V}_n)_{n\in\mathbb{Z}_+}$ are respectively  decreasing and increasing. In particular for any $\epsilon>0$, if $T^Z_N<\infty$,
$$\widetilde{V}_{T^Z_{\lfloor N(1-\epsilon)\rfloor}}\leq \widetilde{V}_{T^Z_{N-1}}\leq\tilde{U}_{T^Z_{N-1}}=\tilde{U}_{T^Z_N}\leq \tilde{U}_{T^Z_N}.$$ 
Therefore 
$$\mathbb{E}\left(\widetilde{V}_{T^Z_{\lfloor N(1-\epsilon)\rfloor}}\mathbf{1}_{T^Z_N<\infty}\right)\leq \mathbb{E}\left(\tilde{U}_{T^Z_{N-1}}\mathbf{1}_{T^Z_N<\infty}\right)=\mathbb{E}\left(\tilde{U}_{T^Z_N}\mathbf{1}_{T^Z_N<\infty}\right)\leq \mathbb{E}\left(\tilde{U}_{T^Z_{\lfloor N(1-\epsilon)\rfloor}}\mathbf{1}_{T^Z_N<\infty}\right).$$ 
What is more, $$\mathbb{E}\left(\widetilde{V}_{T^Z_{\lfloor N(1-\epsilon)\rfloor}}\mathbf{1}_{T^Z_N<\infty}\right)\geq \mathbb{E}\left(\widetilde{V}_{T^Z_{\lfloor N(1-\epsilon)\rfloor}}\mathbf{1}_{T^Z_{\lfloor N(1-\epsilon)\rfloor}<\infty}\right)-\mathbb{P}(T^Z_N=\infty,T^Z_{\lfloor N(1-\epsilon)\rfloor}<\infty),$$ while $$\mathbb{E}\left(\tilde{U}_{T^Z_{\lfloor N(1-\epsilon)\rfloor}}\mathbf{1}_{T^Z_N<\infty}\right)\leq \mathbb{E}\left(\tilde{U}_{T^Z_{\lfloor N(1-\epsilon)\rfloor}}\mathbf{1}_{T^Z_{\lfloor N(1-\epsilon)\rfloor}<\infty}\right).$$

Therefore since $\mathbb{P}(T^Z_N=\infty,T^Z_{\lfloor N(1-\epsilon)\rfloor}<\infty)$ goes to $0$ when  $N$ goes to infinity, from Proposition \ref{prop-main},
$$\limsup_{N\rightarrow\infty} \mathbb{E}\left(\tilde{U}_{T^Z_{N}}\mathbf{1}_{T^Z_N<\infty}\right)\leq \frac{\left(\frac{a}{(1-\epsilon)}\right)^{\frac{1+s}{2s}}}{\left(\frac{1-a}{\epsilon}\right)^{\frac{1}{2s}}}+\left(\int_{a}^{1-\epsilon}\frac{(1-u)^{\frac{1}{2s}}}{u^{\frac{1+s}{2s}}}\left[\frac{1}{2}+\frac{1}{2s}\frac{1}{1-u}\right]du\right)\times \frac{a^{\frac{1+s}{2s}}}{(1-a)^{\frac{1}{2s}}}$$

and $$\liminf_{N\rightarrow\infty} \mathbb{E}\left(\tilde{U}_{T^Z_{N}}\mathbf{1}_{T^Z_N<\infty}\right)\geq \left(\int_{a}^{1-\epsilon}\frac{(1-u)^{\frac{1}{2s}}}{u^{\frac{1+s}{2s}}}\left[\frac{1}{2}+\frac{1}{2s}\frac{1}{1-u}\right]du\right)\times \frac{a^{\frac{1+s}{2s}}}{(1-a)^{\frac{1}{2s}}}.$$

Letting $\epsilon$ go to $0$ gives the result.
\end{proof}

\appendix

\section{Mathematica calculations}\label{sec:Mathematica}

These calculations made by Mathematica are the justification for the calculation of $a$ given in Equation \eqref{eq-a}.

\begin{doublespace}
\noindent\(\pmb{\text{pk}=(2+s)*k*(N-k)/(N*(k+(1+s)*(N-k)));}\)
\end{doublespace}

\begin{doublespace}
\noindent\(\pmb{\text{ak}=((1-\text{pk})*k*(N-k))/((2+s)*(1-\text{pk})*k*(N-k)+\text{pk}*2*N*(k*k+(1+s)*(N-k)}\\
\pmb{*(N-k)))}\)
\end{doublespace}

\begin{doublespace}
\noindent\(\frac{k (-k+N) \left(1-\frac{k (-k+N) (2+s)}{N (k+(-k+N) (1+s))}\right)}{\frac{2 k (-k+N) (2+s) \left(k^2+(-k+N)^2 (1+s)\right)}{k+(-k+N)
(1+s)}+k (-k+N) (2+s) \left(1-\frac{k (-k+N) (2+s)}{N (k+(-k+N) (1+s))}\right)}\)
\end{doublespace}

\begin{doublespace}
\noindent\(\pmb{\text{ak}=\text{Factor}[\text{FullSimplify}[\text{ak}]]}\)
\end{doublespace}

\begin{doublespace}
\noindent\(\frac{1}{(1+2 N) (2+s)}\)
\end{doublespace}

\bibliographystyle{abbrv}
\bibliography{biblio}

\end{document}